\let\epsilon\varepsilon
\newcommand{\eps}{\epsilon}
\renewcommand{\Im}{\ensuremath{\operatorname{Im}}}
\DeclareMathOperator{\Tr}{Tr}
\renewcommand{\kappa}{\varkappa}
\numberwithin{equation}{section}
\newtheoremstyle{ttheorem}%
       {1.8ex\@plus1ex}                
       {2.1ex\@plus1ex\@minus.5ex}      
       {\itshape}           
       {0pt}                   
       {\bfseries}          
       {.}                  
       {.5em}               
       {}                
\newtheoremstyle{ddefinition}%
       {1.8ex\@plus1ex}                
       {2.1ex\@plus1ex\@minus.5ex}      
       {}           
       {0pt}                   
       {\bfseries}           
       {.}                  
       {.5em}               
       {}                
\newtheoremstyle{rremark}%
       {1.8ex\@plus1ex}                
       {2.1ex\@plus1ex\@minus.5ex}      
       {\normalfont}        
       {0pt}                   
       {\bfseries}           
       {.}                  
       {.5em}               
       {}                   
\theoremstyle{ttheorem}
\newtheorem{theorem}{Theorem}[section]
\newtheorem{lemma}[theorem]{Lemma}
\newtheorem{proposition}[theorem]{Proposition}
\newtheorem{corollary}[theorem]{Corollary}
\theoremstyle{ddefinition}
\theoremstyle{rremark}
\newtheorem{remark}[theorem]{Remark}
\newtheorem{myremarks}[theorem]{Remarks}
\newtheorem{myexamples}[theorem]{Examples}
\newtheorem{example}[theorem]{Example}
\newenvironment{remarks}{\begin{myremarks}\begin{nummer}}%
    {\end{nummer}\end{myremarks}}
    {\end{nummer}\end{myexamples}}
\newcounter{numcount}
\newcommand{\labelnummer}{(\roman{numcount})}%
\providecommand{\showkeyslabelformat}[1]{\relax}        
\let\mysaveformat\showkeyslabelformat                   %
\def\myformat#1{\raisebox{-1.5ex}{\mysaveformat{#1}}}   %
\newenvironment{nummer}%
  {\let\curlabelspeicher\@currentlabel%
    \begin{list}{\textup{\labelnummer}}%
      {\usecounter{numcount}\leftmargin0pt%
        \topsep0.5ex\partopsep2ex\parsep0pt\itemsep0ex\@plus1\p@%
        \labelwidth2.5em\itemindent3.5em\labelsep1em%
      }%
    \let\saveitem\item%
    \def\item{\saveitem%
      \def\@currentlabel{\curlabelspeicher\kern.1em\labelnummer}}%
    \let\savelabel\label%
    \def\label##1{{\ifnum\thenumcount=1\let\showkeyslabelformat\myformat\fi\savelabel{##1}}%
										{\def\@currentlabel{\labelnummer}%
									 	\let\showkeyslabelformat\@gobble
									 	\savelabel{##1item}%
										}%
	   							}%
  }{\end{list}}%
\let\curlabelspeicher\@currentlabel%
    \let\saveitem\item%
    \def\item{\saveitem%
      \def\@currentlabel{\curlabelspeicher\kern.1em\labelnummer}}%
    \let\savelabel\label%
    \def\label##1{{\ifnum\thenumcount=1\let\showkeyslabelformat\myformat\fi\savelabel{##1}}%
										{\def\@currentlabel{\labelnummer}%
									 	\let\showkeyslabelformat\@gobble
									 	\savelabel{##1item}%
										}%
    							}%
\let\OldItem\item
\newcommand{\MyItem}[2][]{}%
\def\section{\@startsection{section}{1}%
  \z@{1.3\linespacing\@plus\linespacing}{.5\linespacing}%
  {\normalfont\bfseries\centering}}
\def\subsection{\@startsection{subsection}{2}%
  \z@{.8\linespacing\@plus.5\linespacing}{-1em}%
  {\normalfont\bfseries}}
\def\nlsubsection{\@startsection{subsection}{2}%
  \z@{.8\linespacing\@plus.5\linespacing}{.1ex}%
  {\normalfont\bfseries}}
\let\@afterindenttrue\@afterindentfalse%
\renewenvironment{proof}[1][\proofname]{\par \normalfont
  \topsep6\p@\@plus6\p@ \trivlist 
  \item[\hskip\labelsep\scshape
    #1\@addpunct{.}]\ignorespaces
}{%
  \qed\endtrivlist
}
\def\ps@firstpage{\ps@plain
  \def\@oddfoot{\normalfont\scriptsize \hfil\thepage\hfil
     \global\topskip\normaltopskip}%
  \let\@evenfoot\@oddfoot
  \def\@oddhead{
    \begin{minipage}{\textwidth}
      \normalfont\scriptsize
      \emph{\insertfirsthead}
    \end{minipage}}
  \let\@evenhead\@oddhead 
}
\def\insertfirsthead{}
\def\@cite#1#2{{%
 \m@th\upshape\mdseries[{#1}{\if@tempswa, #2\fi}]}}
\newcommand{\C}{\mathbb{C}}
\newcommand{\N}{\mathbb{N}}
\newcommand{\R}{\mathbb{R}}
\newcommand{\Z}{\mathbb{Z}}
\renewcommand{\le}{\leqslant}
\renewcommand{\leq}{\leqslant}
\renewcommand{\geq}{\geqslant}
\DeclareMathOperator{\tr}{Tr}
\def\sumd{\displaystyle\sum}
\newcommand{\1}{1}
\newcommand{\upd}{\mathrm{d}}
\renewcommand{\d}{\upd}   
\newcommand{\hairspace}{\kern .04167em}
\DeclareMathOperator{\TextIm}{Im}
\renewcommand{\Im}{\TextIm}
\DeclareMathOperator{\sech}{sech}
\newcommand{\beq}{\begin{equation}}
\newcommand{\eeq}{\end{equation}}
\def\sumd{\displaystyle\sum}
\def\intd{\displaystyle\int}
\def\clap#1{\hbox to 0pt{\hss#1\hss}}
\begin{document}

\begin{abstract}
In this note, we study the asymptotics of the determinant $\det(I_N - \beta H_N)$ for $N$ large, where $H_N$ is the $N\times N$ restriction of a Hankel matrix $H$ with finitely many jump discontinuities in its symbol satisfying $\|H\|\leq 1$. Moreover, we assume $\beta\in\C$ with $|\beta|<1$ and $I_N$ denotes the identity matrix. We determine the first order asymtoptics as $N\to\infty$ of such determinants and show that they exhibit power-like asymptotic behaviour, with exponent depending on the height of the jumps. For example, for the $N \times N$ truncation of the Hilbert matrix $\mathbf{H}$ with matrix elements $\pi^{-1}(j+k+1)^{-1}$, where $j,k\in \Z_+$ we obtain
$$
\log \det(I_N - \beta \mathbf{H}_N) = -\frac{\log N}{2\pi^2}
			\big(\pi\arcsin(\beta)+\arcsin^2(\beta)+o(1)\big),\qquad N\to\infty.
$$
\end{abstract}

\sloppy

\title[On determinants identity minus Hankel matrix]{On determinants identity minus Hankel matrix}

\author[E.\ Fedele]{Emilio Fedele}
\address[E.\ Fedele]{Department of Mathematics, King's College London, Strand, London, WC2R 2LS, UK}
\email{emilio.fedele@kcl.ac.uk}

\author[M.\ Gebert]{Martin Gebert}
\address[M.\ Gebert]{School of Mathematical Sciences, Queen Mary University of London, Mile End
Road, London E1 4NS, UK}
\curraddr{ Department of Mathematics, University of California, Davis, Davis, CA 95616, USA}

\email{mgebert@math.ucdavis.edu}
\thanks{M.G. was supported in part by the European Research Council starting grant SPECTRUM (639305)}


\maketitle

\section{Introduction and results}
Given a bounded function $f$ on the unit circle $\mathbb{T}:=\{v \in \C\, :\, |v|=1\}$, the associated Hankel matrix $H(f):\ell^2(\Z_+)\to\ell^2(\Z_+),\, \Z_{+}:=\big\{0,\,1,\,2,\,\ldots\big\}$, is given by its matrix elements
\beq
(H(f))(n,m) = \widehat{f}(n+m),\qquad n,m\in\Z_+,
\eeq
where for $k\in \Z_+$
\beq\label{hankel}
\widehat{f}(k):= \frac 1 {2\pi} \int_0^{2 \pi}  f(e^{it }) e^{-i k t}\d t.
\eeq
The function $f$ is called the \textit{symbol} of the matrix $H(f)$. 

Throughout this note, we restrict our attention to symbols which have finitely-many jump discontinuities and satisfy $\sup_{z\in\mathbb T}|f(z)|\leq 1$.  We will make more precise assumptions later on, see conditions (A)\,-\,(C). Hankel matrices with jump discontinuities in their symbols are well-studied \cite{Wi:66,Pow-Disc, Pow-Hankel,MR1949210} and still attract attention in the operator theory community, see e.g. \cite{MR3214675}.

Our goal here is to study the large $N$ behaviour of
$
 \det(I_N - \beta H_N(f)),
$
where $H_N(f)$ is the $N\times N$ restriction of the Hankel matrix $H(f),\, \beta \in \C$ so that $|\beta|<1$ and $I_N$ is the identity matrix. Our assumption on the boundedness of the symbol ensures that $\|H(f)\|\leq 1$ and so $\|\beta H(f)\|\leq |\beta |<1$.
We will compute the first order term in its asymptotic expansion for large $N$ and show that
\beq\label{eqn:decay}
\det(I_N - \beta H_N(f))=N^{-\gamma_f(\beta)+ o(1)}
\eeq
 as $N\to\infty$,
where the  exponent $\gamma_f(\beta)\in\C$ depends explicitly on the location of the jumps as well as their height, see our main result Theorem \ref{thm:general} for the precise formulation. 

To illustrate our result, consider the following two explicit Hankel matrices 
\begin{gather}\label{eqn:Hilbmat}
\mathbf{H}:=H(\psi)=\Big\{\frac 1 {\pi(n+m+1)}\Big\}_{n,m\in\Z_{+}} \quad\text{and}\quad
\mathbf{S}:=H(\eta)=\Big\{\frac{\sin(\pi(n+m)/2)}{\pi(n+m)}\Big\}_{n,m\in\Z_{+}},
\end{gather}
with the convention that $\mathbf{S}(0,0)=1/2$, where the symbols $\psi$ and $\eta$ are given in Example \ref{example} later on. The matrix $\mathbf{H}$ is the \textit{Hilbert matrix} and is well-known in the literature. 
For these Hankel matrices Theorem \ref{thm:general} states that the asymptotic formula \eqref{eqn:decay} holds with
\beq \label{eqn:hilbexp}
\gamma_\psi(\beta)=\frac{1}{2\pi^2}\left(\pi \arcsin(\beta)+\arcsin^{2}(\beta)\right)\quad\text{and}\quad
\gamma_\eta(\beta)=\frac{1}{\pi^{2}}\arcsin^{2}\Big(\frac{\beta}{2}\Big).
\eeq
The different expressions for $\gamma_\psi(\beta)$ and $\gamma_\eta(\beta)$ in the two cases are related to their symbols having jumps located differently on $\mathbb T$. In the case of $\mathbf H$, the symbol has only one jump located at 1, causing the appearance of both the linear and the quadratic $\arcsin$ term in \eqref{eqn:hilbexp}. In the case of $\mathbf S$, however, the symbol has jumps at the conjugate points $\pm i$ and the linear arcsin term does not appear. In general, only jumps at $\pm 1\in\mathbb T$ will cause a linear $\arcsin$ term whereas an $\arcsin^2$ term will always occur if there are jumps at conjugate points on $\mathbb T$. We also note that $\gamma_\psi(\beta)<0$ for $\beta\in(-1,0)$ and therefore we have in this case power-like growth in \eqref{eqn:decay}. 

The problem which we study here fits into the more general framework of asymptotics of determinants of Hankel, Toeplitz and Hankel plus Toeplitz matrices. These are well-studied objects, see for example \cite{MR1679634,MR1845906,MR2831118,MR3078693, MR3644515} and references therein. Exhaustive answers to various questions related to the asymptotics of Toeplitz and Hankel determinants have been found, however the behaviour of completely general Hankel plus Toeplitz determinants is not entirely understood yet. In most known results, the Hankel and Toeplitz matrix are related to the same symbol. We prove here a first order asymptotic formula for a simple class of Hankel plus Toeplitz determinants which, to the best of our knowledge, does not fall directly in the cases considered before.

We end the introduction with a word about the proof. The first step in studying our problem is to make use of the series expansion of the logarithm $\log (1-z) = -\sum_{n\in\N} z^{n}/n$ valid for $|z|<1$ which implies
\beq
\log\, \det\left(I_N- \beta H_N(f)\right)=\Tr\,\log\left(I_N- \beta H_N(f)\right) = \sum_{n\in\N} \beta^n\Tr H_N(f)^n/ n.
\eeq
The fact that the series expansion is only valid for $|z|<1$ is the reason why we take $|\beta|<1$. The asymptotic behaviour of $\Tr H_N(f)^n$ is found in Lemma \ref{lm:asymptPowers}, and it partially follows from \cite[Theorem 5.1]{Wi:66}, where this was obtained for the simpler case of the Hilbert matrix with only one jump in its symbol. Surprisingly, the first order contributions in the asymptotic expansion of $\tr H_N(f)^n$ are the coefficients of the power series of $\arcsin$ and $\arcsin^2$ times $\log N$, see Proposition \ref{p:log_integral}.

\subsection{Model and results}
As we saw earlier on, the Hankel matrix $H(f):\ell^2(\Z_+)\to\ell^2(\Z_+)$ is determined by its matrix elements
\beq
(H(f))(n,m) = \widehat f (n+m),\qquad n,m\in\Z_+,
\eeq
where $\widehat f(k)$ is defined in \eqref{hankel} for $k\in \Z_+$. It is clear that $H(f)$ depends linearly on $f$. Throughout the paper, we make the following assumptions on the symbol $f$:
\begin{itemize}
\item[(A)] for all $z\in\mathbb T$ the following limits exist
\beq
f(z^+):=\lim_{\eps \searrow 0} f(e^{i \eps}z)\qquad\text{and}\qquad f(z^-):= \lim_{\eps \searrow 0} f(e^{-i \eps}z).
\eeq 
The points where the limits do not coincide are called \textit{jump-discontinuities} and we only assume a finite number of them.
\item[(B)] with $\Omega$ denoting the set of all discontinuities of $f$, we assume that $f \in C^{\gamma}(\mathbb{T}\backslash \Omega)$, for some $1/2<\gamma\leq 1$ and some $C>0$ that for all $\delta>0$ and all $z\in \Omega$
\beq\label{Holder}
|f(z^+) -  f(e^{i \delta}z)|\leq C\delta^\gamma\quad \text{and}\quad  |f(z^-) -  f(e^{-i \delta}z)|\leq C\delta^\gamma;
\eeq
\item[(C)] it satisfies the bound
\beq\label{bound:symbol}
\sup_{v\in\mathbb T} |f(v)| \leq 1.
\eeq
\label{ABC}
\end{itemize}
We write $f \in PD(\mathbb{T})$ if it satisfies all of the above assumptions. For future reference we define for $z\in\mathbb T$
\beq\label{defkappa}
\kappa_z(f) := \frac{f(z^+)- f(z^-)} 2
\eeq
and refer to this as the \textit{height of the jump.} 
\begin{remarks}
\item Assumption (B) is only of technical nature. It simplifies the proofs and for most relevant examples these assumptions are satisfied.
\item The bound \eqref{bound:symbol} guarantees $|\kappa_z(f)|\leq 1$ for all $z\in\mathbb T$ and that the operator $H(f)$ satisfies $\|H(f)\|\leq 1$, see \cite{MR1949210}. 
\end{remarks}
\begin{example}\label{example}
The most important example of a symbol fitting in our framework is given by 
\begin{gather}\label{eqn:HilbSymbol}
\psi(e^{i t}) =i\pi^{-1} e^{-it} (\pi - t),\ \ \  t\in [0,2\pi).
\end{gather}
Integration by parts shows that this is a symbol for the Hilbert matrix given in \eqref{eqn:Hilbmat}, i.e. that $\mathbf{H}=H(\psi)$.
Another example of a function in this class is given by 
\beq
\eta(e^{i t})= 1_{\{\cos t>0\}},\ \ \ t\in [0,2\pi)
\eeq
with jump-discontinuities at the points $\pm i$. This is a symbol for the Hankel matrix $\mathbf S=H(\eta)$ given in \eqref{eqn:Hilbmat}.
\end{example}

As before, let $H_N(f)$ denote the $N\times N$ restriction of the infinite matrix $H(f)$, i.e. let $H_{N}(f):=1_{N}H(f)1_{N}$, where $1_{N}$ is the orthogonal projection onto the span of $\{e_{j}\}_{j=0}^{N-1}$, where $e_{j},\, j\in\Z_+$, are the standard basis vectors of $\ell^2(\Z_+)$. Setting
\begin{equation}\label{def:HN}
	D^{\beta}_N(f) := \det(I_N - \beta H_N(f)),
\end{equation}
our main result is
\begin{theorem}\label{thm:general}
Suppose $f\in PD(\mathbb T)$. Let $\Omega \subset \mathbb{T}$ be the set of its jump discontinuities. For $\beta\in\C$ with $|\beta|<1$ we have
\begin{align}\label{thm:eq2} 
\log D^{\beta}_N (f) =  -\dfrac{\gamma_f(\beta)}{2\pi^2} \log N +o(\log N),
\end{align}
as $N\to\infty$, where 
\beq \label{thm:eq2.1}
\gamma_f(\beta):=\pi\big(\arcsin(-i\beta \kappa_1(f))+\arcsin(-i\beta \kappa_{-1}(f))\big)+ \sum_{z \in \Omega}\arcsin^2(-i \beta  (\kappa_{z}(f)\kappa_{\overline z}(f))^{1/2}).
\eeq
\end{theorem}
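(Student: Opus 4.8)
The plan is to turn the determinant into a trace and then into a convergent series of traces of powers. Since $\|\beta H_N(f)\|\le|\beta|<1$, the matrix $I_N-\beta H_N(f)$ is invertible and the principal logarithm admits the absolutely convergent expansion
\[
\log D^{\beta}_N(f)=\Tr\log\bigl(I_N-\beta H_N(f)\bigr)=-\sum_{n\ge 1}\frac{\beta^n}{n}\,\Tr H_N(f)^n .
\]
So the entire problem reduces to the large-$N$ behaviour of $\Tr H_N(f)^n$ for each fixed $n$, followed by a resummation. I expect each trace to grow like a constant times $\log N$, so that the theorem follows once those constants are identified as Taylor coefficients of $\arcsin$ and $\arcsin^2$ and summed. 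Note that $I_N-\beta H_N(f)$ is of Toeplitz-plus-Hankel type, but since the Toeplitz and Hankel symbols differ I would avoid the Fisher--Hartwig machinery and work directly with the traces.

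To extract the trace asymptotics I would pass to an integral representation. Expanding each entry via \eqref{hankel} and summing the geometric series in the indices $j_1,\dots,j_n\in\{0,\dots,N-1\}$ gives
\[
\Tr H_N(f)^n=\frac{1}{(2\pi)^n}\int_{[0,2\pi]^n}\Bigl(\prod_{r=1}^n f(e^{it_r})\Bigr)\prod_{r=1}^n D_N(t_{r-1}+t_r)\,dt_1\cdots dt_n,\qquad t_0:=t_n,
\]
where $D_N(s)=\sum_{j=0}^{N-1}e^{-ijs}$ is the Dirichlet kernel. As $N\to\infty$ the kernels concentrate on $\{t_{r-1}+t_r\equiv 0\bmod 2\pi\}$, so the dominant contribution comes from configurations in which the variables alternate, $t_r\approx(-1)^r t_n$ around the cycle. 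On any arc where $f$ is H\"older continuous (assumption (B)) the integrand is too regular to diverge and contributes $O(1)$; the logarithmic growth is produced only near the jumps of $f$, where $f$ is locally a step function and its Fourier coefficients decay like $1/k$. Localising near each point of $\Omega$ and replacing $f$ by its local jump model reduces the computation to the Hilbert-matrix model analysed by Widom, which is the substance of Lemma~\ref{lm:asymptPowers}.

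The parity of $n$ enters through the closing-up condition for the alternating configuration, since $t_n\approx(-1)^n t_n$. For odd $n$ this forces $t_n\in\{0,\pi\}$, i.e.\ the self-conjugate points $z=\pm1$, yielding the odd-power contributions that resum to the linear $\arcsin$ terms attached to $\kappa_1(f)$ and $\kappa_{-1}(f)$. For even $n$ the condition holds for a generic $t_n=s$, pairing the jump at $z=e^{is}$ with that at $\overline z=e^{-is}$; the contribution is controlled by the geometric mean $(\kappa_z(f)\kappa_{\overline z}(f))^{1/2}$ and resums to the $\arcsin^2$ terms. Performing this bookkeeping and matching constants is exactly Proposition~\ref{p:log_integral}, giving
\[
\Tr H_N(f)^n=\frac{a_n(f)}{2\pi^2}\,\log N+o(\log N),\qquad N\to\infty,
\]
with coefficients $a_n(f)$ arranged so that $\sum_{n\ge 1}\beta^n a_n(f)/n=\gamma_f(\beta)$, the right-hand side of \eqref{thm:eq2.1}.

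Finally I would substitute this into the logarithmic series and interchange the sum over $n$ with the limit $N\to\infty$. Formally each term contributes $-\beta^n a_n(f)\log N/(2\pi^2 n)$, summing to $-\gamma_f(\beta)\log N/(2\pi^2)$; the real work is to show the error terms sum to $o(\log N)$ uniformly. This is where $|\beta|<1$ is indispensable: I would establish an $N$-uniform bound $\sup_N|\Tr H_N(f)^n|/\log N\le M_n$ with $M_n$ growing slowly enough that $\sum_n |\beta|^n M_n/n<\infty$, which lets dominated convergence control the tail while the leading per-$n$ asymptotics handle the finitely many initial terms. The main obstacle, in my view, is precisely this double limit, that is, proving Lemma~\ref{lm:asymptPowers} with error estimates uniform enough in $n$ to justify the interchange, since the crude bound $|\Tr H_N(f)^n|\le N$ is far too weak. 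Once the interchange is secured, recognising $\sum_n\beta^n a_n(f)/n$ as the combination of $\arcsin$ and $\arcsin^2$ series in \eqref{thm:eq2.1} is a direct power-series identification and completes the proof.
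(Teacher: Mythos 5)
Your overall architecture coincides with the paper's: expand $\log D^{\beta}_N(f)=-\sum_{k\ge1}\beta^k\Tr H_N(f)^k/k$, prove per-$k$ asymptotics $\Tr H_N(f)^k=\mu_k(f)\log N+o(\log N)$ with exactly the odd/even parity structure you describe, and resum via the $\arcsin$ and $\arcsin^2$ power series of Proposition~\ref{p:log_integral}. But there is a genuine gap at the step you yourself single out as the crux: the interchange of the sum over $n$ with the limit $N\to\infty$. You say you ``would establish'' a bound $\sup_N|\Tr H_N(f)^n|/\log N\le M_n$ with $\sum_n|\beta|^n M_n/n<\infty$, but you give no mechanism for obtaining it, and you mis-diagnose what is required by asking for error estimates in Lemma~\ref{lm:asymptPowers} that are uniform in $n$. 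No such uniformity is needed. The paper's resolution is short and entirely operator-theoretic: since $\|H_N(f)\|\le 1$ by assumption (C), one has for every $k\ge 2$
\beq
\big|\Tr H_N(f)^k\big|\;\le\;\|H_N(f)\|^{k-2}\,\|H_N(f)\|_2^2\;\le\;\|H_N(f)\|_2^2,
\eeq
and Proposition~\ref{cor:tr:asympt} supplies $\|H_N(f)\|_2^2=O(\log N)$, so $M_n$ may be taken \emph{constant} and the tail $\sum_{k>M}$ is bounded by $C\beta^{M+2}(1-\beta)^{-1}\log N$, which vanishes after the double limsup in $M$ and $N$. The Hilbert--Schmidt estimate is itself not free: it is proved by comparing $f$ with the model symbol $\Psi=\sum_{z\in\Omega}\kappa_z\psi_z$, using $f-\Psi\in C^\gamma(\mathbb T)\subset B_2^{1/2}(\mathbb T)$ and Peller's criterion to get $H(f)-H(\Psi)\in S^2$, and then estimating the explicit kernel $1/(n+m+1)$ of the model. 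Without this input (or an equivalent one), your dominated-convergence step has nothing to dominate with.

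A second, related gap is that your route to the per-$n$ asymptotics via the Dirichlet-kernel integral is a heuristic rather than a proof. Concentration reasoning is treacherous here precisely because $D_N$ is not an approximate identity: $\|D_N\|_{L^1}\sim\log N$, and this divergence is the very source of the logarithm you are trying to extract, so ``the kernels concentrate on $t_{r-1}+t_r\equiv 0$'' cannot be invoked as stated, and the cross terms between distinct jumps are not controlled by it. The paper instead localizes at the operator level: replace $f$ by $\Psi$ with an $S^2$ error (Lemma~\ref{lem:phips3}), decouple distinct jumps using the Pushnitski--Yafaev localization principle $H(\psi_z)H(\psi_w)\in S^1$ for $w\ne z,\bar z$ (Lemma~\ref{lem:phipsi}), and only then invoke Widom's computation for the Hilbert matrix together with an algebraic expansion in the unitaries $U_z$ for conjugate pairs (Lemma~\ref{lm:asymptPowers}). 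Your alternation/parity picture does predict the correct answer---odd powers see only $z=\pm1$, even powers pair $z$ with $\bar z$ through $(\kappa_z\kappa_{\bar z})^{k/2}$---but as written it is a plausibility argument; turning it into a proof would essentially force you to redevelop the Schatten-class estimates above.
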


\begin{remarks}
\item  The expression in \eqref{thm:eq2.1} is independent of the choice of the analytic branch of the square root. This follows from the power series 
\beq
\arcsin^2(v) =  \dfrac{1}{2}\sumd\limits_{m=1}^{\infty} 
		\dfrac{\left(m!\right)^2 4^m v^{2m}}{\left(2m\right)! m^2},\quad |v|\leq 1,
\eeq	
which implies that $\arcsin^2$ is a function of $v^2$. 
\item It is evident that we have a non-zero contribution in \eqref{thm:eq2} only if both $z$ and $\overline z$ are jump-discontinuities of the symbol $f$. For example, in the case of self-adjoint Hankel matrices the jump discontinuities only appear in pairs $z,\overline z$ and $\gamma_f(\beta)\neq 0$ in this case. 
Moreover, the terms $\arcsin\left(-i\beta \kappa_{\pm 1}(f)\right)$ only appear for jumps at $z=\pm 1$. This is yet another manifestation of the subtle differences  between jumps at $z=\pm 1$ compared to those located at $z\in\mathbb T\setminus\{\pm1\}$, see for example \cite{Pow-Hankel,MR3475464}. 
\item Even though the proof of the theorem relies on  $|\beta|<1$, we believe that the above asymptotic formula holds for $|\beta|=1$. Indeed, using different methods, this can be achieved for the special case of the Hilbert matrix $\mathbf{H}$ given in \eqref{eqn:Hilbmat}. In this case one can prove
\beq
\log \det(I_N- \mathbf{H}_N) =  - \frac{\gamma_\psi(1)}{2\pi^2}\log N + o\left(\log N\right)
\eeq
as $N\to\infty$, see \cite{GP}, where
\beq
\frac{\gamma_\psi(1)}{2\pi^2} =  \frac{1}{2\pi^2}\left(\pi \arcsin(1)+\arcsin^{2}(1)\right) =\frac 3 8.
\eeq
The authors use the explicit diagonalization of the Hilbert matrix and their methods cannot immediately be generalized to arbitrary Hankel matrices with jump discontinuities in the symbol considered here.

\end{remarks}

Using our methods, one can also consider asymptotics of determinants related to powers of Hankel matrices. For instance, one can prove the following

\begin{corollary}\label{coro:square}
Let $0\leq \beta< 1$ and, as before, denote by $\mathbf{H}$ the Hilbert matrix.
Then as $N\to\infty$
\beq
 \log\det( I_N - \beta^{2} 1_N\mathbf{H}^{2}\1_N) = -\frac {1}{ \pi^2}\arcsin^2(\beta)\log N   + o(\log N). 
\eeq
\end{corollary}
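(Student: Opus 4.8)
\emph{The plan is to replace the truncated square $1_N\mathbf H^2 1_N$ by the square of the truncation $\mathbf H_N^2$, where $\mathbf H_N:=1_N\mathbf H 1_N=H_N(\psi)$, and to evaluate the latter determinant by factorising and invoking Theorem~\ref{thm:general}.} First I would observe that $\mathbf H_N$ is self-adjoint, so that $I_N-\beta^2\mathbf H_N^2=(I_N-\beta\mathbf H_N)(I_N+\beta\mathbf H_N)$. As the determinant is multiplicative, this gives
$$
\log\det(I_N-\beta^2\mathbf H_N^2)=\log D^{\beta}_N(\psi)+\log D^{-\beta}_N(\psi).
$$
Since $0\le\beta<1$, both $\beta$ and $-\beta$ have modulus $<1$, so Theorem~\ref{thm:general} applies to each term; by \eqref{eqn:hilbexp} the exponent of $D^{\beta}_N(\psi)$ is $\tfrac{1}{2\pi^2}(\pi\arcsin\beta+\arcsin^2\beta)$. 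The linear $\arcsin$ contribution is odd in $\beta$ and cancels, while the $\arcsin^2$ contribution is even and doubles, whence
$$
\log\det(I_N-\beta^2\mathbf H_N^2)=-\tfrac{1}{\pi^2}\arcsin^2(\beta)\,\log N+o(\log N),
$$
which is already the asserted leading term.

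It then remains to pass from the square of the truncation to the truncation of the square. Inserting $I=1_N+(I-1_N)$ between the two copies of $\mathbf H$ gives $1_N\mathbf H^2 1_N=\mathbf H_N^2+R_N$ with $R_N:=1_N\mathbf H(I-1_N)\mathbf H 1_N=X^{*}X\ge 0$, where $X:=(I-1_N)\mathbf H 1_N$. The quantitative heart of the argument is that $R_N$ is small in trace norm, uniformly in $N$: using the entries $\pi^{-1}(j+k+1)^{-1}$ of $\mathbf H$,
$$
\Tr R_N=\lVert X\rVert_{\HS}^2=\frac{1}{\pi^2}\sum_{k=0}^{N-1}\sum_{j\ge N}\frac{1}{(j+k+1)^2},
$$
and comparison of the inner sum with an integral yields $\sum_{j\ge N}(j+k+1)^{-2}=\Oh\big((N+k)^{-1}\big)$, so that $\sum_{k=0}^{N-1}(N+k)^{-1}\to\log 2$ and hence $\Tr R_N=\Oh(1)$.

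Finally I would control the logarithm of the ratio of the two determinants. Writing $A:=\beta^2\mathbf H_N^2$ and $B:=A+\beta^2 R_N=\beta^2\,1_N\mathbf H^2 1_N$, both $I_N-A$ and $I_N-B$ are positive definite with spectrum in $[1-\beta^2,1]$, since $0\le\mathbf H_N^2,\,1_N\mathbf H^2 1_N\le 1_N$. Therefore
$$
\frac{\det(I_N-B)}{\det(I_N-A)}=\det\!\big(I_N-\widetilde R_N\big),\qquad \widetilde R_N:=\beta^2(I_N-A)^{-1/2}R_N(I_N-A)^{-1/2}\ge 0,
$$
with $\Tr\widetilde R_N\le\tfrac{\beta^2}{1-\beta^2}\Tr R_N=\Oh(1)$; moreover $I_N-\widetilde R_N=(I_N-A)^{-1/2}(I_N-B)(I_N-A)^{-1/2}\ge(1-\beta^2)I_N$, so every eigenvalue of $\widetilde R_N$ lies in $[0,\beta^2]$, where $\beta<1$ is used crucially. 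By convexity $-\log(1-t)\le C_\beta\,t$ on $[0,\beta^2]$, so $\bigabs{\log\det(I_N-\widetilde R_N)}\le C_\beta\Tr\widetilde R_N=\Oh(1)=o(\log N)$. Combining this with the first step gives the claimed asymptotics.

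The main obstacle is the non-commutativity of truncation and squaring treated in the last two paragraphs: the whole argument hinges on the uniform bound $\Tr R_N=\Oh(1)$, which is exactly where the $(j+k+1)^{-1}$ decay of the Hilbert matrix enters, and on keeping the perturbed determinant non-degenerate, which the hypothesis $\beta<1$ guarantees. The first step, by contrast, is a direct consequence of Theorem~\ref{thm:general} together with the parity of $\arcsin$ and $\arcsin^2$.
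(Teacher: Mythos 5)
Your proof is correct, but it takes a genuinely different route from the paper's for the main term. The paper never factorises: it expands $\log\det(I_N-\beta^2 1_N\mathbf H^2 1_N)$ by the logarithm series, proves $\Tr(1_N\mathbf H^2 1_N)^k=\Tr\mathbf H_N^{2k}+O(1)$ for each $k$ by telescoping $\Tr A^k-\Tr B^k=\sum_{j}\Tr A^j(A-B)B^{k-1-j}$ together with H\"older's inequality, and then feeds the asymptotics of $\Tr\mathbf H_N^{2k}$ from Lemma \ref{lm:asymptPowers} into the series via Proposition \ref{p:log_integral}, i.e.\ it re-runs the proof of Theorem \ref{thm:general} at the level of traces of powers. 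You instead use Theorem \ref{thm:general} as a black box through the identity $I_N-\beta^2\mathbf H_N^2=(I_N-\beta\mathbf H_N)(I_N+\beta\mathbf H_N)$, with the parity of $\arcsin$ and $\arcsin^2$ producing the cancellation of the linear term and the doubling of the quadratic one; this is more modular and makes transparent why the answer is $\pi^{-2}\arcsin^2(\beta)$. Both arguments rest on exactly the same quantitative input, namely
\begin{equation*}
\Tr\bigl(1_N\mathbf H(1-1_N)\mathbf H 1_N\bigr)=\|1_N\mathbf H(1-1_N)\|_2^2=O(1),
\end{equation*}
which is the paper's $\|A-B\|_1$ (computed via positivity of $A-B$) and your $\Tr R_N$; you in fact verify it by the integral comparison that the paper leaves implicit. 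Where you diverge again is in transferring this $O(1)$ bound to the determinants: the paper absorbs the error termwise inside the convergent series (using $|\beta|<1$ for summability), whereas you use a spectral argument, writing $\det(I_N-B)/\det(I_N-A)=\det(I_N-\widetilde R_N)$ with $\widetilde R_N=\beta^2(I_N-A)^{-1/2}R_N(I_N-A)^{-1/2}$, bounding its eigenvalues in $[0,\beta^2]$ and its trace by $\tfrac{\beta^2}{1-\beta^2}\Tr R_N$, so that the log of the ratio is $O(1)$; here $\beta<1$ enters through non-degeneracy of $I_N-A$ rather than through series convergence. Your route exploits self-adjointness and positivity more heavily (harmless for the Hilbert matrix), while the paper's trace-level route is the one that generalises directly within its own machinery; both are complete proofs of the corollary.
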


\begin{remark}
Determinants of the above form appear in the study of the asymptotic behaviour of ground-state overlaps of many-body fermionic systems.
In this context Corollary~\ref{coro:square} gives a partial answers to a question asked in \cite[Rmk.~2.7]{MR3376020}. We will not explain the problem here and refer
to \cite{MR3551180,MR3341967} for a precise formulation and further 
reading about the relation of the problem to determinants of Hankel operators.
\end{remark}

\section{Proof of Theorem \ref{thm:general}}
In the following we denote by $S^p$ the standard Schatten-$p$-class and by $\|\cdot\|_p$ its norm for $p\geq 1$. 

Let $f\in PD(\mathbb T)$ and we write for brevity $\kappa_z=\kappa_z(f)$. 
Since the operator norm of Hankel operators satisfies $\|H(f)\|\leq \sup_{v\in\mathbb T} |f(v)|$, assumption (C) implies for all $N\in\N$ that 
\beq
\|\beta H_N(f)\| \leq |\beta|<1.
\eeq
Hence the series expansion 
 \beq\label{log:series}
 \log(1-v) =-\displaystyle \sum_{k\in \N} \frac{v^k}k
 \eeq
  valid for all $|v|<1$ implies that 
	\begin{align}\label{e:log_det}
	\log\det ( I - \beta H_N(f)) 
		= -\sumd\limits_{k=1}^{\infty}\beta^k \dfrac{\Tr H_N(f)^k}{k}.
	\end{align}

In the next step we compute the asymptotics of $\Tr H_N(f)^k$ when $N\to\infty$. This is the main part of the proof. 

\begin{theorem}\label{thm:tr:asympt}
Let $f\in PD(\mathbb T)$ and $\Omega$ be the set of its jumps discontinuities. We denote by $B$ the Beta function. Then, for $k\in\N$ odd, we obtain
\beq\label{thm:tr:asympteq1}
\Tr H_N(f )^k =  \frac{\kappa_{1}^k+ \kappa_{-1}^k}{2\pi^2} (-i)^k  B\Big(\frac{k}{2},
			\frac{1}{2}\Big) \log N + o(\log N)
\eeq
as $N\to\infty$ and for $k\in\N$ even we obtain 
\begin{align}
\Tr H_N(f )^k
	=  
\sum_{z\in\Omega}	\displaystyle\frac{(\kappa_{z}\kappa_{\overline z})^{k/2}}{2\pi^2} (-i)^k B\Big(\frac{k}{2},
			\frac{1}{2}\Big) \log N + o(\log N)
\label{eq122}	
\end{align}
as $N\to\infty$. 
\end{theorem}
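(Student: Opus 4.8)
The plan is to convert $\Tr H_N(f)^k$ into an oscillatory integral over the torus and then localise it around the jumps of $f$. Inserting the integral representation \eqref{hankel} of $\widehat f$ into the cyclic trace $\Tr H_N(f)^k=\sum_{n_1,\dots,n_k=0}^{N-1}\widehat f(n_1+n_2)\widehat f(n_2+n_3)\cdots\widehat f(n_k+n_1)$ and summing the geometric series in each index $n_j$ gives
\beq
\Tr H_N(f)^k=\frac{1}{(2\pi)^k}\int_{[0,2\pi)^k}\Bigl(\prod_{j=1}^k f(e^{it_j})\Bigr)\Bigl(\prod_{j=1}^k D_N(t_j+t_{j-1})\Bigr)\,\d t_1\cdots\d t_k,
\eeq
with the cyclic convention $t_0:=t_k$ and the Dirichlet-type kernel $D_N(x):=\sum_{n=0}^{N-1}e^{-inx}$. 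All of the $N$-dependence now sits in the kernels, whose mass concentrates where their arguments are $\equiv 0\pmod{2\pi}$.

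Next I would carry out the localisation forced by the product of kernels. Read cyclically, the conditions $t_j+t_{j-1}\equiv 0\pmod{2\pi}$ give $t_j\equiv(-1)^{j-1}t_1$, and the closing condition separates the two parities. For $k$ odd it demands $2t_1\equiv 0$, so the integral concentrates at the two isolated points $t_1\in\{0,\pi\}$, i.e.\ $z=\pm1$, where the $\log N$-contribution is governed by the jump heights $\kappa_{\pm1}$. For $k$ even the closing condition is vacuous, and the concentration set is the one-parameter ``antidiagonal'' $\{(t_1,-t_1,t_1,\dots)\}$ parametrised by $z=e^{it_1}\in\mathbb{T}$, on which $\prod_j f(e^{it_j})=f(z)^{k/2}f(\overline z)^{k/2}$, so the contribution is governed by the pair $\kappa_z,\kappa_{\overline z}$. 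This geometric dichotomy is exactly the origin of the two shapes \eqref{thm:tr:asympteq1} and \eqref{eq122}.

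To isolate the jumps I would split $f=\sum_{z\in\Omega}\kappa_z\phi_z+r$, where $\phi_z$ is a fixed model symbol (a rotated copy of the Hilbert symbol $\psi$ of Example \ref{example}) carrying a single unit jump at $z$, so that the remainder $r$ lies in $C^{\gamma}(\mathbb{T})$ and is continuous at every point of $\Omega$. Feeding this splitting back into the integral and again localising, two estimates must be established, both relying on assumption (B). First, the continuous remainder $r$ contributes only $o(\log N)$; this reduces to bounding the localised integral against the Hölder modulus $\delta^{\gamma}$, and is exactly where the hypothesis $\gamma>1/2$ becomes essential and sharp, since it matches the $L^2$-scaling of $D_N$ near its peak. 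Second, cross terms coupling two \emph{non-conjugate} jumps are $o(\log N)$, because the kernels $D_N$ cannot concentrate at two distinct points simultaneously. After discarding these, only products built from a single jump (for $k$ odd) or from a conjugate pair (for $k$ even) survive at order $\log N$.

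Finally, the surviving single-jump contribution for a unit jump at $z=1$ is precisely the quantity evaluated in Widom \cite[Theorem 5.1]{Wi:66}, which yields the coefficient $\tfrac{1}{2\pi^2}(-i)^k B(k/2,1/2)$, the factor $(-i)^k$ arising from the phase of the model symbol; rotation and reflection transport this to an arbitrary $z\in\Omega$ and to a conjugate pair, while the parity analysis above distributes the weights as $\kappa_1^k+\kappa_{-1}^k$ ($k$ odd) and $\sum_{z\in\Omega}(\kappa_z\kappa_{\overline z})^{k/2}$ ($k$ even). The extraction of the $\log N$ prefactor from the resulting localised one-dimensional integral is supplied by Proposition \ref{p:log_integral}. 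I expect the main obstacle to be the localisation step: since the kernels $D_N$ are signed and only conditionally summable, one cannot argue by positivity, and the genuine work lies in controlling the interference among the $k$ kernels and between distinct jumps, together with proving that the $C^{\gamma}$ remainder is truly $o(\log N)$ at the sharp threshold $\gamma>1/2$.
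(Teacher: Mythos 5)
Your overall architecture parallels the paper's: decompose $f$ into rotated Hilbert-matrix symbols carrying the jumps plus a H\"older remainder, discard the remainder and the cross terms between non-conjugate jumps, and quote Widom for the single-jump model; your parity analysis (concentration at $t_j\equiv 0,\pi$ for $k$ odd versus the antidiagonal curve for $k$ even) correctly identifies the origin of the dichotomy between \eqref{thm:tr:asympteq1} and \eqref{eq122}. However, the middle of your argument is carried out by a genuinely different method --- direct localisation of the multilinear integral with kernels $D_N(t_j+t_{j-1})$ --- and it is exactly there that the proposal stops short of a proof. The two estimates you yourself flag as ``the genuine work'', namely (i) that the $C^\gamma$ remainder contributes $o(\log N)$ and (ii) that non-conjugate jumps decouple to order $o(\log N)$, are the actual content of the theorem, and no mechanism is offered to establish them. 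They are not routine in the integral picture: each $D_N$ carries $L^1$-mass of order $\log N$ and the kernels are signed, so absolute-value bounds on the off-diagonal regions overshoot to powers of $\log N$ (and near the concentration set to powers of $N$); beating this down to $o(\log N)$ for $k\geq 3$ is a coupled multilinear cancellation problem, and Widom's one-jump analysis does not simply iterate. Asserting that ``the kernels cannot concentrate at two distinct points simultaneously'' names the expected phenomenon but does not prove it.

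The paper obtains both estimates from operator theory and thereby avoids the oscillatory-integral work entirely: for (i), $f-\Psi\in C^\gamma(\mathbb T)\subset B_2^{1/2}(\mathbb T)$ gives $H(f)-H(\Psi)\in S^2$ by Peller's theorem, and a telescoping identity plus the Schatten--H\"older inequality, combined with the a priori bound $\|H_N(f)\|_2^2=O(\log N)$ of Proposition \ref{cor:tr:asympt}, yields $\Tr H_N(f)^k=\Tr H_N(\Psi)^k+O\big((\log N)^{1/2}\big)$ (Lemma \ref{lem:phips3}); for (ii), the Pushnitski--Yafaev localization principle $H(\psi_z)H(\psi_w)\in S^1$ for $w\notin\{z,\overline z\}$, together with a Laptev--Safarov bound, reduces all mixed words to $O(1)$ (Lemma \ref{lem:phipsi}). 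Your remark that $\gamma>1/2$ matches the $L^2$-scaling of $D_N$ is consistent with this (it is precisely the Besov threshold $B_2^{1/2}$), but in your framework it remains a heuristic. Two smaller inaccuracies: the passage from $z=1$ to an arbitrary jump is not mere ``rotation and reflection transport'' --- in the paper it requires the $2^k$-term word expansion of $H_N(a\psi_z+b\psi_{\overline z})^k$, the pointwise bound on $\mathbf H_N U_{z^2}\mathbf H_N$, and an Abel-summation argument (Lemma \ref{Leibniz}) to kill the oscillating diagonal sums for odd $k$; and Proposition \ref{p:log_integral} plays no role in extracting the $\log N$ coefficient $B(k/2,1/2)/(2\pi^2)$ (that is Widom's computation), being used only afterwards to resum the series over $k$ in Theorem \ref{thm:general}. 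As it stands, then, the proposal is a plausible program in the spirit of Widom whose two central estimates remain open, rather than a proof.
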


In particular, it follows that for any $k\in\N$ the following limits exists
\beq
\lim_{N\to\infty} \frac{\Tr H_N(f )^k}{\log N} =: \mu_k(f)
\eeq
where $\mu_k(f)\in\C$ is given in \eqref{thm:tr:asympteq1}, respectively \eqref{eq122}. Moreover, we need the following proposition.

\begin{proposition}\label{cor:tr:asympt} 
Let $f\in PD(\mathbb T)$. Then 
\beq
\limsup_{N\to\infty} \frac{\|H_N(f )\|_2^2 }{\log N}<\infty.
\eeq
\end{proposition}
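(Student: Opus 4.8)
The plan is to reduce the statement to a one-dimensional weighted sum of $|\widehat f(j)|^2$ and then to estimate that sum using the Fourier decay dictated by the jumps and the Hölder regularity. Since $H_N(f)$ has matrix elements $\widehat f(n+m)$ for $0\le n,m\le N-1$, its Hilbert--Schmidt norm is
\beq
\|H_N(f)\|_2^2 = \sum_{n,m=0}^{N-1}|\widehat f(n+m)|^2 = \sum_{j=0}^{2N-2} w_N(j)\,|\widehat f(j)|^2,
\eeq
where $w_N(j)=\#\{(n,m):0\le n,m\le N-1,\ n+m=j\}=\min(j+1,\,2N-1-j)$ counts the anti-diagonals. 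In particular $w_N(j)\le j+1$ for every $j$, so it suffices to prove $\sum_{j\ge 1} w_N(j)\,|\widehat f(j)|^2 = \Oh(\log N)$ (the $j=0$ term is a single bounded constant and is harmless).

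The first step is to separate the jumps, because the crude pointwise decay of $\widehat f(j)$ will not suffice. I would write $f = f_J + g$, where $f_J$ is a fixed finite linear combination of translated sawtooth functions, one centred at each $z\in\Omega$, chosen so that $f_J$ reproduces exactly the jumps of $f$ at every point of $\Omega$, and $g:=f-f_J$. Then the jumps cancel, so $g$ is continuous on all of $\mathbb{T}$, and by assumption (B) one checks $g\in C^\gamma(\mathbb{T})$ with the same $\gamma\in(1/2,1]$ (a routine verification that the Hölder constant stays uniform across the finitely many former jump points). The sawtooth building block has Fourier coefficients of size exactly $\Oh(1/j)$, so $|\widehat{f_J}(j)|\le C/j$, and using $w_N(j)\le j+1$,
\beq
\sum_{j=1}^{2N-2} w_N(j)\,|\widehat{f_J}(j)|^2 \le C^2 \sum_{j=1}^{2N-2}\frac{j+1}{j^2} = \Oh(\log N).
\eeq

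The main work is the contribution of $g$, and this is exactly where the hypothesis $\gamma>1/2$ is used. Here I would \emph{not} use the pointwise bound $|\widehat g(j)|=\Oh(j^{-\gamma})$, which weighted by $w_N(j)\le j+1$ would only yield the useless estimate $\Oh(N^{2-2\gamma})$ when $\gamma<1$; instead I would establish the weighted square-summability
\beq\label{e:Hhalf}
\sum_{j\in\Z}(1+|j|)\,|\widehat g(j)|^2 <\infty,
\eeq
that is, $g\in H^{1/2}(\mathbb{T})$. This follows from the $L^2$ modulus of continuity: since $g\in C^\gamma$, one has $\|\,g(\eexp^{i(\cdot+h)})-g\,\|_{L^2}^2\le C|h|^{2\gamma}$, and Plancherel rewrites the left-hand side as $\sum_j 4\sin^2(jh/2)\,|\widehat g(j)|^2$. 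Choosing $h\sim 2^{-n}$ so that $\sin^2(jh/2)$ is bounded below on the dyadic block $|j|\sim 2^n$ gives $\sum_{|j|\sim 2^n}|\widehat g(j)|^2\lesssim 2^{-2n\gamma}$, whence $\sum_{|j|\sim 2^n}(1+|j|)|\widehat g(j)|^2\lesssim 2^{n(1-2\gamma)}$; summing the geometric series over $n$ converges precisely because $1-2\gamma<0$. Granted \eqref{e:Hhalf}, the $g$-contribution is bounded by $\sum_j (j+1)|\widehat g(j)|^2=\Oh(1)$, uniformly in $N$.

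Combining the two pieces through $|\widehat f(j)|^2\le 2|\widehat{f_J}(j)|^2+2|\widehat g(j)|^2$ then gives $\|H_N(f)\|_2^2=\Oh(\log N)$, and dividing by $\log N$ yields the claimed finite $\limsup$. The only genuinely delicate point is \eqref{e:Hhalf}: one must resist bounding the Fourier coefficients of the Hölder remainder pointwise and instead exploit their collective square-summability, which is precisely the role played by the threshold $\gamma>1/2$ in assumption (B).
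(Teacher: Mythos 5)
Your proof is correct, and its skeleton is the same as the paper's: split $f$ into an explicit jump part plus a $C^\gamma$ remainder, extract the $\log N$ from the jump part's $\Oh(1/j)$ Fourier decay, and use $\gamma>1/2$ to show the remainder contributes $\Oh(1)$. The paper's jump part is $\Psi=\sum_{z\in\Omega}\kappa_z\psi_z$, built from rotated Hilbert-matrix symbols rather than sawtooths (same role, same $\Oh(1/j)$ coefficients), and its bound $\sum_{n,m=0}^{N-1}(n+m+1)^{-2}=\Oh(\log N)$ is precisely your anti-diagonal count $w_N(j)\le j+1$ in double-sum form. Where you genuinely diverge is the remainder: the paper notes $f-\Psi\in C^\gamma(\mathbb{T})\subset B_2^{1/2}(\mathbb{T})$ and invokes Peller's criterion \cite[Chap.~6, Thm.~2.1]{MR1949210} to conclude $H(f-\Psi)\in S^2$, whereas you reprove the needed direction of that criterion by hand: the identity $\|H(g)\|_2^2=\sum_{j\ge 0}(j+1)|\widehat g(j)|^2$ reduces it to the embedding $C^\gamma(\mathbb{T})\subset H^{1/2}(\mathbb{T})$, which you establish with the standard dyadic Plancherel argument on the $L^2$ modulus of continuity. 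Your route is self-contained and makes the role of the threshold $\gamma>1/2$ completely explicit; the paper's route is shorter and delivers the operator-level fact $H(f)-H(\Psi)\in S^2$, which it reuses in the trace comparison of Lemma \ref{lem:phips3}, so the citation does double duty. Both arguments are sound; yours trades brevity for transparency.
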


We prove Theorem \ref{thm:tr:asympt} and Proposition \ref{cor:tr:asympt} in Section \ref{sec:pfThm2}. Theorem \ref{thm:tr:asympt} might be of independent interest. We also need one more proposition to prove Theorem \ref{thm:general}.

\begin{proposition}\label{p:log_integral}
	Let $|v|\leq 1$, then the series
	\begin{equation*}
		S(v):= \sumd\limits_{m=1}^{\infty}v^m
		\dfrac{B\left(\frac{m}{2},\frac{1}{2}\right)}{2\pi^2 m}\qquad\text{and}\qquad
		T(v) := \sumd\limits_{m=1}^{\infty}v^{2m}
		\dfrac{B\left(m,\frac{1}{2}\right)}{4\pi^2 m}.
	\end{equation*}
    are absolutely convergent and the following identities hold
	\begin{equation}\label{e:log_integral}
		S(v) = \dfrac{1}{2\pi}\arcsin(v)
		+\dfrac{1}{2\pi^2}\arcsin^2(v)\quad \text{and}\quad
	T(v) = \dfrac{1}{2\pi^2}\arcsin^2(v).
	\eeq
\end{proposition}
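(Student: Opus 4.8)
The plan is to reduce both identities to two classical power series: the Maclaurin expansion
\[
\arcsin(v)=\sum_{n=0}^{\infty}\frac{(2n)!}{4^{n}(n!)^{2}(2n+1)}\,v^{2n+1},
\]
and the expansion of $\arcsin^{2}$ recorded in the first remark after Theorem~\ref{thm:general}. The bridge between these and the Beta-function coefficients is the Gamma representation $B(x,y)=\Gamma(x)\Gamma(y)/\Gamma(x+y)$ together with the half-integer values $\Gamma(\tfrac12)=\sqrt\pi$, $\Gamma(n)=(n-1)!$ and $\Gamma(n+\tfrac12)=\frac{(2n)!}{4^{n}n!}\sqrt\pi$.

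I would treat $T$ first, since it is the cleaner computation. Substituting $B(m,\tfrac12)=\Gamma(m)\Gamma(\tfrac12)/\Gamma(m+\tfrac12)$ and simplifying with the identities above turns the $m$-th coefficient $B(m,\tfrac12)/(4\pi^{2}m)$ into $\frac{1}{4\pi^{2}}\cdot\frac{4^{m}(m!)^{2}}{m^{2}(2m)!}$, so that $T(v)=\frac{1}{4\pi^{2}}\sum_{m\ge 1}\frac{4^{m}(m!)^{2}}{m^{2}(2m)!}v^{2m}$. Comparing with the $\arcsin^{2}$ series, whose coefficients are exactly one half of these, yields $T(v)=\frac{1}{2\pi^{2}}\arcsin^{2}(v)$.

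For $S$ I would split the sum according to the parity of $m$. For even indices $m=2n$ one has $B(n,\tfrac12)/(2\pi^{2}\cdot 2n)=B(n,\tfrac12)/(4\pi^{2}n)$, so the even part of $S$ reproduces the series for $T$ verbatim and contributes $\frac{1}{2\pi^{2}}\arcsin^{2}(v)$. For odd indices $m=2n+1$ the relevant Beta value is $B(n+\tfrac12,\tfrac12)=\frac{(2n)!\,\pi}{4^{n}(n!)^{2}}$, and the odd part becomes $\frac{1}{2\pi}\sum_{n\ge 0}\frac{(2n)!}{4^{n}(n!)^{2}(2n+1)}v^{2n+1}=\frac{1}{2\pi}\arcsin(v)$. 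Adding the two parts gives the claimed formula for $S$.

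The remaining point --- and the only genuinely delicate one, since the identities are then pure series manipulations --- is the absolute convergence up to and including $|v|=1$, which is what justifies both the rearrangement of $S$ into even and odd parts and the termwise comparison with the known expansions. Here I would invoke the asymptotics $\Gamma(x)/\Gamma(x+\tfrac12)\sim x^{-1/2}$ as $x\to\infty$, which give $B(m/2,\tfrac12)=\Oh(m^{-1/2})$ and $B(m,\tfrac12)=\Oh(m^{-1/2})$, so that the coefficients of $S$ and $T$ are $\Oh(m^{-3/2})$. Since $\sum_{m}m^{-3/2}<\infty$, both series converge absolutely on the closed unit disc, the rearrangement is legitimate, and the classical series for $\arcsin$ and $\arcsin^{2}$ are themselves valid at $|v|=1$. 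This closes the argument.
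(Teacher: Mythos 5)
Your proposal is correct and follows essentially the same route as the paper's proof: split $S$ by parity of the index, convert the Beta values $B(n+\tfrac12,\tfrac12)$ and $B(n,\tfrac12)$ into factorial form, and match the resulting series with the classical expansions of $\arcsin$ and $\arcsin^2$ (the paper cites Gradshteyn--Ryzhik for these), identifying $T$ with the even part of $S$. Your explicit $\Oh(m^{-3/2})$ coefficient bound for absolute convergence on $|v|\leq 1$ is a small added justification that the paper handles by appealing to the absolute convergence of the cited series, but it does not change the argument.
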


\begin{remark}
	The series $S(v)$ can also be written as an integral. A computation shows that
	\beq\label{eq:remark}
		\dfrac{1}{\pi} \intd\limits_{0}^{\infty} 
			\sech^m\left(u \pi\right)\d u = \dfrac{1}{2\pi^2}B\Big(\frac{m}{2},
			\dfrac{1}{2}\Big).
	\eeq
	Hence, Fubini's theorem implies for $|v|\leq 1$ that
	\begin{equation}\label{e:log_integral_taylor}
	-S(v) =  - \frac 1 \pi \sumd\limits_{m=1}^{\infty} \intd\limits_{0}^{\infty}
	\dfrac{\left(v\sech(\pi u)\right)^m}{ m}\d u = 
	\dfrac{1}{\pi}\intd\limits_{0}^{\infty}\log\left(1-v\sech(\pi u)\right)\d u.
	\end{equation}
\end{remark}

\begin{proof}[Proof of Proposition~\ref{p:log_integral}]
	We split the sum $S(v)$, $|v|\leq 1$,  in two parts, one corresponding to the odd terms and the even ones. The odd contribution is
	\begin{align}
		I^{\left(\mbox{odd}\right)}\left(v\right) = \dfrac{1}{2\pi^2}
		\sumd\limits_{m=0}^{\infty}
		\dfrac{B\left(m+\frac{1}{2},\frac{1}{2}\right)v^{2m+1}}{2m+1}
		&= \dfrac{1}{2\pi}\sumd\limits_{m=0}^{\infty}
		\dfrac{\left(2m\right)!v^{2m+1}}{4^m \left(m!\right)^2\left(2m+1\right)}\notag\\
		&= \dfrac{1}{2\pi}\arcsin(v).\label{Iodd}
	\end{align}
	The even contribution to the sum is
	\begin{align}
		I^{\left(\mbox{even}\right)}\left(v\right) = \dfrac{1}{2\pi^2}
		\sumd\limits_{m=1}^{\infty} \dfrac{B\left(m,\frac{1}{2}\right)v^{2m}}{2m}
		&= \dfrac{1}{4\pi^2}\sumd\limits_{m=1}^{\infty} 
		\dfrac{\left(m!\right)^2 4^m v^{2m}}{\left(2m\right)! m^2}\notag\\
		&= \dfrac{1}{2\pi^2} \arcsin^2(v).\label{Ieven}
	\end{align}
	Here, we used the power series expansions for $\arcsin$ and $\arcsin^2$ stated in \cite[(1.641) and (1.645)]{MR2360010} which is absolutely convergent for $|v|\leq 1$. This gives the result where we note that $T(v)$ is the same as $I^{\left(\mbox{even}\right)}\left(v\right)$.
\end{proof}

Given Theorem \ref{thm:tr:asympt}, Proposition \ref{cor:tr:asympt}  and Proposition \ref{p:log_integral}, we are in position to prove Theorem~\ref{thm:general}.

\begin{proof}[Proof of Theorem \ref{thm:general}]
Since by assumption $\|\beta H_N(f)\|<1$, we use the series expansion \eqref{e:log_det} and obtain for any $M\in\N$ that 
\beq\label{lm:pf:thm:eq1}
\log \det(I_N - \beta H_N(f)) = -\sum_{k=1}^M \beta^k \dfrac{\Tr H_N(f)^k}{k} - \sum_{k=M+1}^\infty \beta^k \dfrac{\Tr H_N(f)^k}{k}.
\eeq
First, we focus on
\beq
\limsup_{N\to\infty}\Big|\frac 1 {\log N} \sum_{k=M+1}^\infty \beta^k \dfrac{\Tr H_N(f)^k}{k}\Big|.
\eeq
To do so, we use $ \|H_N( f)\|\leq 1$ to obtain the inequality 
\beq
|\Tr H_N( f)^k| \leq  \|H_N(f)\|^{k-2} \|H_N( f)\|_2^2\leq \| H_N(f)\|_2^2
\eeq
 valid for $k\in \N$ and $k\geq 2$ which yields for $M>1$ that
\begin{align}
\Big|\sum_{k=M+1}^\infty \beta^m \dfrac{\Tr H_N(f)^k}{k}\Big| \leq \sum_{k=M+1}^\infty \beta^k \dfrac{\| H_N(f)\|_2^2}{k}
 \leq  \| H_N(f)\|_2^2 \frac {\beta^{M+2}}{1-\beta}. 
\end{align}
Proposition \ref{cor:tr:asympt}  implies that 
$
\displaystyle\limsup_{N\to\infty} \|  H_N(f)\|_2^2\big/\log N = \mu
$
for some $\mu\in\R$
and therefore since $|\beta|<1$ we have that 
\beq
\limsup_{M\to\infty}\limsup_{N\to\infty} \Big|\frac 1 {\log N} \sum_{k=M+1}^\infty \beta^k \dfrac{\Tr H_N(f)^k}{k}\Big| 
\leq 
 \mu\, \limsup_{M\to\infty}  \frac {\beta^{M+2}}{1-\beta}
= 0. 
\eeq
Plugging this into \eqref{lm:pf:thm:eq1} and recalling that $\displaystyle\lim_{N\to\infty} \Tr  H_N(f)^k/\log N= \mu_k(f)$, $k\in\N$, by Theorem \ref{thm:tr:asympt}, we obtain that 
\begin{align}
\limsup_{N\to\infty} \frac{\log \det(I_N - \beta H_N(f))}{\log N} 
\leq&
\limsup_{M\to\infty}\,\Big(-\sum_{k=1}^M \frac{\beta^k} k \mu_k(f) \Big)\notag\\
&+
\limsup_{M\to\infty}\limsup_{N\to\infty} \Big|\frac 1 {\log N} \sum_{k=M+1}^\infty \beta^k \dfrac{\Tr H_N(f)^k }{k}\Big|\notag \\
=&
\limsup_{M\to\infty}\,\Big(-\sum_{k=1}^M \frac{\beta^k} k \mu_k(f)\Big).
\end{align}
Since $| \kappa_z \kappa_{\overline z}|\leq 1$ for all $z\in \Omega$ by assumption, the sum 
 $\sum_{k=1}^\infty \frac{\beta^k} k \mu_k(f)$ for $|\beta|\leq 1$ is absolutely convergent, see Proposition \ref{p:log_integral}. This implies that 
 \beq
 \limsup_{M\to\infty}\Big(-\sum_{k=1}^M \frac{\beta^k} k \mu_k(f)\Big)=\, -\sum_{k=1}^\infty \frac{\beta^k} k \mu_k(f).
 \eeq
Along the very same lines we also obtain that
\beq
\liminf_{N\to\infty} \frac{\log \det(I_N - \beta H_N(f))}{\log N} \geq-\sum_{k=1}^\infty \frac{\beta^k} k \mu_k(f)
\eeq
and therefore we end up with
\begin{align}
\lim_{N\to\infty} \frac{\log \det(I_N - \beta H_N(f))}{\log N} 
=&
  -\sum_{k=1}^\infty \frac{\beta^{k}} {k}\mu_k(f)\label{abc}
\end{align}
and the power series in Proposition \ref{p:log_integral} below give the result. 
\end{proof}

\begin{proof}[Proof of Corollary \ref{coro:square}]
To prove Corollary \ref{coro:square} one uses the expansion \eqref{e:log_det} and obtains for $0\leq \beta<1$
\beq
\log \det( I_N -  \beta^{2}1_N\mathbf H^21_N) =- \sum_{k\in\N} \beta^{2k} \frac {\Tr (1_N\mathbf H^{2}1_N)^k}{k}.
\eeq
For the rest of the proof we use the abbreviations $A:= 1_N\mathbf H^{2}1_N$ and $B:= \mathbf H_N^2$. Then 
\beq\label{core:eq1}
\Tr A^k  - \Tr B^k  = \sum_{j=0}^{k-1} \Tr  A ^j(A-B)  B^{k-1-j}
\eeq
and H\"older's inequality implies 
\begin{align}
\big| \Tr  A ^j(A-B)  B^{k-1-j} \big|   
\leq \|A\|^j \|A-B\|_1 \|B\|^{k-1-j}.
\end{align}
From the definition of $\mathbf H$ we obtain $\|A\|,\|B\|\leq 1$. Moreover, by the positivity of $A-B$, we obtain
\beq\label{core:eq2}
\|A-B\|_1 = \Tr 1_N \mathbf H (1-1_N) \mathbf H 1_N= \|1_N \mathbf H (1-1_N)\|_2^2 = O(1)
\eeq
as $N\to\infty$, where the last inequality follows easily from the explicit matrix elements of $\mathbf H$. Equations \eqref{core:eq1}--\eqref{core:eq2} imply 
\beq
\Tr (1_N\mathbf H^{2}1_N)^n  = \Tr \mathbf H_N^{2n} + O(1).
\eeq
For the latter we computed the first order asymptotics as $N\to\infty$ in Lemma \ref{lm:asymptPowers}. Then the assertion follows from Propostition \ref{p:log_integral} along the very same lines as Theorem \ref{thm:general}. 
\end{proof}

\section{Analysis of the model operator}

To prove Theorem \ref{thm:tr:asympt}, we first investigate a family of model operators related to the Hilbert matrix introduced in \cite[Chap. 10.1]{MR1949210}. We recall the Hilbert matrix $\mathbf H:=H(\psi)$ introduced in \eqref{eqn:Hilbmat} with symbol
\beq
\psi(e^{it})= \frac 1 \pi i e^{-i t} (\pi - t),\quad t\in[0,2\pi).
\eeq
In particular,  one has $\|\mathbf H\|=1$. We define the following model symbols for $z\in\mathbb T$
\begin{align}\label{def:psiz}
\psi_{z}(e^{it})&:=\frac 1 {i}\psi(\overline z e^{it}),\ \ \ t\in [0,\, 2\pi).
\end{align}
For any $z \in \mathbb T$ this function satisfies
\beq\label{jumpskappa}
\psi_{z}(z^+) - \psi_{z}(z^-) = 2. 
\eeq
Furthermore, the corresponding Hankel matrix $H(\psi_z)$ admits the representation
\beq\label{defHankelrep}
H(\psi_z)= \frac{1}{ i} U_z \mathbf H U_z 
\eeq
where $U_z:\ell^2(\Z_+)\to\ell^2(\Z_+)$ is the unitary operator  given by  $(U_z x)(n):= z^n x(n)$ for $x\in \ell^2(\Z_+)$, $z\in\mathbb T$ and $n\in\Z_+$. In particular, one can compute the matrix elements explicitly and one obtains for $z\in\mathbb T$
\beq\label{kernel}
(H(\psi_z))(n,m) = \frac{1}{ i} \frac{z^{n+m}}{n+m+1}. 
\eeq
%

The large $N$ asymptotics of traces of powers of the model operators can be computed explicitly:

\begin{lemma}\label{lm:asymptPowers}
We denote by $B$ the Beta-function. Let $k\in\N$. Then, for $a\in\C$ we obtain that
\begin{align}\displaystyle
\Tr H_N(a \psi_{z})^k =  \begin{cases} \displaystyle\frac{(-i)^k (a)^k}{2\pi^2}  B\Big(\frac{k}{2},
			\frac{1}{2}\Big) \log N + o(\log N), & z\in\pm 1\\
	o(\log N), &
	z\in\mathbb T\setminus\{\pm 1\}\label{eq1}
	\end{cases}
\end{align}
as $N\to\infty$ while for $z\in\mathbb T\setminus\{\pm 1\}$ and  $a,b\in\C$ we obtain that
\begin{align}
 \Tr H_N(a \psi_z + b \psi_{\overline z})^k
	=\begin{cases}
	O(1),&\ k\in\N\ \text{odd},\\
	\displaystyle\frac{(-i)^k(ab)^{k/2}}{\pi^2} B\Big(\frac{k}{2},
			\frac{1}{2}\Big) \log N + o(\log N),&\ k\in\N\ \text{even}
	\end{cases}
\label{eq12}	
\end{align}
as $N\to\infty$.
\end{lemma}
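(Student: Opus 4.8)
The plan is to reduce the entire statement to the single asymptotic $\Tr\mathbf H_N^k\sim c_k\log N$ for the truncated Hilbert matrix, via the unitary conjugation \eqref{defHankelrep} and an integral representation of $\mathbf H_N$. First I would use $H(\psi_z)=\frac1i U_z\mathbf H U_z$ together with the fact that the \emph{diagonal} unitary $U_z$ commutes with the truncation $1_N$, so that $H_N(a\psi_z)=\frac{a}{i}U_z\mathbf H_N U_z$. Since $U_zU_z=U_{z^2}$, cyclicity of the trace collapses the powers to $\Tr H_N(a\psi_z)^k=(-i)^k a^k\,\Tr(U_{z^2}\mathbf H_N)^k$, so everything rests on $\Tr(U_w\mathbf H_N)^k$ for $w\in\mathbb T$. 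Writing $\mathbf H_N=\frac1\pi\int_0^1 v_x v_x^{\top}\,\dx$ with $v_x=(x^n)_{n=0}^{N-1}$ (from $\frac{1}{\pi(n+m+1)}=\frac1\pi\int_0^1 x^{n+m}\,\dx$) and using $v_x^{\top}v_y=\sum_{n=0}^{N-1}(xy)^n=\frac{1-(xy)^N}{1-xy}$, I obtain the closed form
\[
\Tr(U_w\mathbf H_N)^k=\frac{1}{\pi^k}\int_{[0,1]^k}\prod_{j=1}^{k}\frac{1-(w x_{j-1}x_j)^N}{1-w x_{j-1}x_j}\,\dx_1\cdots\dx_k,\qquad x_0:=x_k.
\]
For $w\ne1$ the denominators never vanish on $[0,1]^k$ (indeed $w x_{j-1}x_j=1$ would force $x_{j-1}x_j=\bar w\in[0,1]$, impossible unless $w=1$), so on the compact domain the integrand is bounded uniformly in $N$ and the integral is $O(1)$. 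As $z\ne\pm1$ gives $w=z^2\ne1$, this already yields the $o(\log N)$ line of \eqref{eq1}.

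For the remaining case $z=\pm1$ one has $w=z^2=1$ and $\Tr(U_1\mathbf H_N)^k=\Tr\mathbf H_N^k$. For this I would invoke the sharp asymptotics $\Tr\mathbf H_N^k=\frac{1}{2\pi^2}B(\tfrac k2,\tfrac12)\log N+o(\log N)$, which is \cite[Thm.~5.1]{Wi:66}; equivalently it follows from the diagonalisation of $\mathbf H$ with eigenvalue function $\sech(\pi u)$ and the fact that the truncation acquires an integrated density of states of size $\tfrac{\log N}{\pi}\,\du$, whence $\Tr\mathbf H_N^k\sim\tfrac{\log N}{\pi}\int_0^\infty\sech^k(\pi u)\,\du$, and the identity \eqref{eq:remark} supplies the constant. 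Combined with the reduction $\Tr H_N(a\psi_{\pm1})^k=(-i)^ka^k\Tr\mathbf H_N^k$ this gives the first line of \eqref{eq1}.

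Finally, for \eqref{eq12} I would set $A:=\frac{a}{i}U_z\mathbf H_NU_z$, $B:=\frac{b}{i}U_{\bar z}\mathbf H_NU_{\bar z}$ and expand $\Tr(A+B)^k=\sum_{\epsilon\in\{A,B\}^k}\Tr\epsilon_1\cdots\epsilon_k$. By the integral representation each word becomes a $k$-fold integral whose $j$-th \emph{connector} is dictated by the adjacency $(\epsilon_j,\epsilon_{j+1})$: an $A$--$A$ adjacency yields $\frac{1-(z^2x_jx_{j+1})^N}{1-z^2x_jx_{j+1}}$, a $B$--$B$ adjacency the same with $\bar z^2$, and a mixed $A$--$B$ or $B$--$A$ adjacency yields $\frac{1-(x_jx_{j+1})^N}{1-x_jx_{j+1}}$ because $z\bar z=1$. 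Since $z\ne\pm1$ both $z^2\ne1$ and $\bar z^2\ne1$, so the same-type connectors are bounded by a constant, while $\frac{1-t^N}{1-t}\le\frac1{1-t}$ on $[0,1)$ dominates the mixed ones by $\frac1{1-x_jx_{j+1}}$. If a cyclic word has at least one same-type adjacency, bounding those connectors by the constant severs the cycle of mixed connectors into disjoint paths; integrating each path from its endpoints produces only (integrable) powers of $\log\frac{1}{1-x}$, so $\prod_{\text{path edges}}\frac1{1-x_jx_{j+1}}$ is integrable over $[0,1]^k$ uniformly in $N$ and the word is $O(1)$. The only words \emph{without} a same-type adjacency are the two perfectly alternating cyclic words, which exist only when $k$ is even; hence for $k$ odd every word is $O(1)$, giving $\Tr(A+B)^k=O(1)$. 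For $k$ even each alternating word has all $k$ connectors of mixed type, so it equals $(-i)^k(ab)^{k/2}\,\Tr\mathbf H_N^k$ (prefactor $(a/i\pi)^{k/2}(b/i\pi)^{k/2}$ times the $\Tr\mathbf H_N^k$ integral); summing the two yields $\frac{(-i)^k(ab)^{k/2}}{\pi^2}B(\tfrac k2,\tfrac12)\log N+o(\log N)$, as claimed.

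The main obstacle is the sharp leading coefficient for $\Tr\mathbf H_N^k$ in the $w=1$ case: it encodes the genuine spectral/scaling behaviour of the truncated Hilbert matrix and is the one input that cannot be obtained by elementary domination, requiring Wilf's theorem or the density-of-states analysis above. Everything else, in particular the suppression of all non-alternating words to $O(1)$, reduces to the elementary bound $\frac{1-t^N}{1-t}\le\frac1{1-t}$ together with the combinatorics of which cyclic $\{A,B\}$-words avoid repeated adjacencies.
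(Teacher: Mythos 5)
Your proof is correct, and while it follows the same skeleton as the paper --- reduce to the Hilbert matrix through $H(\psi_z)=\frac{1}{i}U_z\mathbf{H}U_z$ and the commutation of $U_z$ with $1_N$, expand the $k$-th power into $2^k$ cyclic words, recognise that only the perfectly alternating words (which exist only for $k$ even, and collapse to $\Tr\mathbf{H}_N^k$) can produce a $\log N$ term, and take the asymptotics of $\Tr\mathbf{H}_N^k$ from Widom \cite{Wi:66} --- your handling of all the error terms is genuinely different. The paper bounds every word containing an internal factor $\mathbf{H}_N U_{z^2}\mathbf{H}_N$ via Peller's pointwise kernel estimate \cite[Chap.~10, Lem.~1.2]{MR1949210}, and then still needs a separate Abel-summation argument (its Lemma~\ref{Leibniz}, using the monotonicity of $n\mapsto \mathbf{H}_N^k(n,n)$) for the maximally alternating odd words, whose traces survive that decomposition as $\Tr U_{z^2}\mathbf{H}_N^k$. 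You instead insert the representation $\frac{1}{\pi(n+m+1)}=\frac{1}{\pi}\int_0^1 x^{n+m}\,\dx$ once and for all, so that every word becomes an explicit integral of geometric-sum connectors; the connectors coming from same-type adjacencies are uniformly bounded because $\inf_{x,y\in[0,1]}\abs{1-z^{2}xy}>0$ when $z\neq\pm1$, and the remaining Hilbert-type connectors sit on vertex-disjoint paths whose product is integrable uniformly in $N$. This buys you a fully self-contained error analysis (no Peller lemma, no Abel summation, all error words treated uniformly whether the same-type adjacency is internal or the cyclic wrap-around) and the stronger conclusion $O(1)$ instead of $o(\log N)$ in the second line of \eqref{eq1}. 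What it costs is that the one analytic claim you state without proof --- that integrating $\prod_j \frac{1}{1-x_{j-1}x_j}$ along a path yields only integrable powers of $\log\frac{1}{1-x}$ --- deserves a short induction, e.g.\ iterating the bound $\int_0^1\bigl(\log\tfrac{1}{1-t}\bigr)^{j}\,\frac{\dt}{1-ty}\leq C_j\bigl(1+\bigl(\log\tfrac{1}{1-y}\bigr)^{j+1}\bigr)$, which follows from $1-ty\geq\frac{1}{2}\bigl((1-t)+(1-y)\bigr)$. Two cosmetic points: the leading-term input is Widom's theorem, not ``Wilf's'', and your density-of-states derivation of $\Tr\mathbf{H}_N^k\sim \frac{1}{2\pi^2}B\bigl(\frac{k}{2},\frac{1}{2}\bigr)\log N$ via $\sech(\pi u)$ is only a heuristic, so, exactly as in the paper, the citation of \cite{Wi:66} must carry that step.
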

We prove Lemma \ref{lm:asymptPowers} in Section \ref{pf:lmtr}.

\section{Proof of Theorem \ref{thm:tr:asympt}}\label{sec:pfThm2}

Let $f\in PD(\mathbb T)$ and let $\Omega$ be the set of its jump discontinuities and we define
\beq
\Psi:=  \sum_{z\in\Omega} \varkappa_z\psi_z.
\eeq
The definition of $\psi_z$ in \eqref{def:psiz}, the identity \eqref{jumpskappa} and the definition of $\kappa_z=\kappa_z(f)$ in \eqref{defkappa}, imply that the jumps of $f$ and $\Psi$ are located at the same points and the heights of the jumps are the same, i.e. $\kappa_z(f) = \kappa_z(\Psi)$ for all $z\in \mathbb T$. Moreover, by assumption (B),  $f\in \C^\gamma(\mathbb T\setminus \Omega)$ for some $1/2<\gamma\leq 1$ and clearly $\Psi\in C^\infty(\mathbb T\setminus\Omega)$ which implies
 \beq\label{11111}
 f - \Psi \in C^\gamma(\mathbb T).
\eeq
We first prove Proposition~\ref{cor:tr:asympt}.

\begin{proof}[Proof of Proposition \ref{cor:tr:asympt}]
We first note that the definition of Besov spaces $B_2^{1/2}(\mathbb T)$ given in \cite[eq. (A.2.10)]{MR1949210} implies that 
$C^\gamma(\mathbb T) \subset B_2^{1/2}(\mathbb T)$ for $1/2<\gamma\leq 1$. Hence, \cite[Chap. 6, Thm.~2.1]{MR1949210}
and \eqref{11111} imply that
\beq\label{pclass}
H(f) - H(\Psi) \in S^{2}
\eeq
for all $\gamma>1/2$. Hence, using the above and Jensen's inequality we obtain
\beq\label{1}
\|H_N(f)\|^2_2 \leq 2 \|H(\Psi)\|^2_2 + O(1)
\eeq
as $N\to\infty$. The explicit representation of the matrix entries of $H(\Psi)$ in \eqref{kernel} implies
\beq\label{22}
\|H(\Psi)\|_2^2\leq C \sum_{n=0}^{N-1} \sum_{m=0}^{N-1} \frac 1{(n+m+1)^2},
\eeq
for some constant $C>0$. Estimating the latter double sum by the corresponding integral, we obtain
\beq\label{3}
\sum_{n=0}^{N-1} \sum_{m=0}^{N-1} \frac 1{(n+m+1)^2} = O(\log N)
\eeq
as $N\to\infty$. This, together with \eqref{1} and \eqref{22}, gives the assertion. 
\end{proof}

\begin{lemma}\label{lem:phips3}
Let $k\in\N$. Then the asymptotic formula
\beq
 \Tr H_N(f )^k   =  \Tr  H_N(\Psi)^k + o(\log N)
\eeq
 holds as $N\to\infty$.
\end{lemma}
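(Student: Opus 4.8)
The plan is to set $A := H_N(f) = 1_N H(f) 1_N$ and $B := H_N(\Psi) = 1_N H(\Psi) 1_N$ and to compare $\Tr A^k$ with $\Tr B^k$ through the telescoping identity
\[
\Tr A^k - \Tr B^k = \sum_{j=0}^{k-1}\Tr\bigl(A^j (A-B) B^{k-1-j}\bigr),
\]
exactly as in \eqref{core:eq1}. The decisive structural input is that the difference is small only in the Hilbert--Schmidt sense: by linearity of $f\mapsto H(f)$ and \eqref{pclass}, $H(f)-H(\Psi)=H(f-\Psi)\in S^2$, so $\|A-B\|_2\le\|H(f-\Psi)\|_2 = O(1)$, while we have no control on $\|A-B\|_1$. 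The strategy is therefore to pair the single $S^2$ factor $A-B$ with a companion Hilbert--Schmidt factor supplied by a neighbouring power of $A$ or $B$, using that products of two $S^2$ operators lie in $S^1$.

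First I would record the uniform bounds needed. Assumption (C) gives $\|A\|\le\|H(f)\|\le 1$, and writing $H(\Psi)=H(f)-H(f-\Psi)$ yields $\|B\|\le\|H(\Psi)\|\le 1+\|f-\Psi\|_\infty=:C<\infty$, uniformly in $N$. For the Hilbert--Schmidt sizes, Proposition~\ref{cor:tr:asympt} gives $\|A\|_2=O((\log N)^{1/2})$, and since $\|B\|_2\le\|A\|_2+\|H(f-\Psi)\|_2$ the same bound holds for $\|B\|_2$.

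For $k\ge 2$ I would estimate each telescoping term using $|\Tr M|\le\|M\|_1$ and $\|XY\|_1\le\|X\|_2\|Y\|_2$. For $1\le j\le k-1$, taking $X=A^j$, $Y=(A-B)B^{k-1-j}$ and using $\|A^j\|_2\le\|A\|^{j-1}\|A\|_2\le\|A\|_2$ (since $\|A\|\le1$) together with $\|(A-B)B^{k-1-j}\|_2\le\|A-B\|_2\,C^{k-1-j}$,
\[
\bigl|\Tr(A^j(A-B)B^{k-1-j})\bigr|\le \|A\|_2\,\|A-B\|_2\,C^{k-1-j}=O\bigl((\log N)^{1/2}\bigr),
\]
and for $j=0$ (where $k-1\ge 1$) the bound $\|(A-B)B^{k-1}\|_1\le\|A-B\|_2\|B\|^{k-2}\|B\|_2=O((\log N)^{1/2})$ does the same. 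Summing the $k$ terms keeps the total $O((\log N)^{1/2})=o(\log N)$.

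The main obstacle is the case $k=1$, where the telescoping sum degenerates to $\Tr(A-B)=\Tr\bigl(1_N(H(f)-H(\Psi))1_N\bigr)$ and the crude estimate $|\Tr(A-B)|\le\|A-B\|_1$ is unavailable, while the naive diagonal bound via $S^2$-membership only yields $O(N^{1/2})$. Here I would instead compute the trace explicitly as a diagonal sum: with $g:=f-\Psi$, the Hankel entries are $(H(g))(n,m)=\widehat g(n+m)$, so $\Tr A-\Tr B=\sum_{n=0}^{N-1}\widehat g(2n)$. The refined ingredient is the Hankel Hilbert--Schmidt identity $\|H(g)\|_2^2=\sum_{j\ge 0}(j+1)|\widehat g(j)|^2<\infty$, which is precisely the membership $H(g)\in S^2$ counted with multiplicities, together with Cauchy--Schwarz:
\[
\Bigl|\sum_{n=0}^{N-1}\widehat g(2n)\Bigr|
\le\Bigl(\sum_{n=0}^{N-1}\tfrac{1}{2n+1}\Bigr)^{1/2}\Bigl(\sum_{n=0}^{N-1}(2n+1)|\widehat g(2n)|^2\Bigr)^{1/2}
\le (C\log N)^{1/2}\,\|H(g)\|_2=O\bigl((\log N)^{1/2}\bigr).
\]
Thus $\Tr A-\Tr B=o(\log N)$ in the case $k=1$ as well, completing the proof. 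The only genuine work beyond routine Schatten--Hölder bookkeeping is this last Cauchy--Schwarz estimate, which exploits the weighted summability of $\widehat g$ encoded in $H(g)\in S^2$ to turn an apparent $O(N^{1/2})$ bound into the required $o(\log N)$.
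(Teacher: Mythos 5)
Your proof is correct. For $k\ge 2$ it is essentially the paper's own argument: the same telescoping identity, followed by Schatten--H\"older estimates that pair the single Hilbert--Schmidt factor $A-B=1_N H(f-\Psi)1_N$ (which is $O(1)$ in $S^2$ by \eqref{pclass}) with the factor $\|H_N(f)\|_2$ or $\|H_N(\Psi)\|_2 = O((\log N)^{1/2})$ supplied by Proposition~\ref{cor:tr:asympt}, giving $O((\log N)^{1/2})=o(\log N)$ per term. Where you genuinely depart from the paper is the case $k=1$. There the paper does not use $S^2$ membership at all: it invokes the absolute summability of the Fourier coefficients of $f-\Psi$ (Bernstein's theorem, \cite[Thm.~1.13]{MR3052498}, available because assumption (B) gives H\"older exponent $\gamma>1/2$), which yields the stronger bound $\Tr H_N(f)-\Tr H_N(\Psi)=O(1)$. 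You instead write the difference of traces as $\sum_{n=0}^{N-1}\widehat g(2n)$ with $g=f-\Psi$ and combine the Hankel Hilbert--Schmidt identity $\|H(g)\|_2^2=\sum_{j\ge 0}(j+1)|\widehat g(j)|^2$ with Cauchy--Schwarz to get $O((\log N)^{1/2})$. This is weaker than the paper's $O(1)$ but amply sufficient for the claimed $o(\log N)$, and it buys a more self-contained proof: the entire lemma then rests on the single hypothesis $H(f-\Psi)\in S^2$, with no appeal to Bernstein's theorem, so it would survive under weaker regularity of $f-\Psi$ (anything guaranteeing $S^2$ membership) at the cost of a slightly worse error term for $k=1$.
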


\begin{proof}
As before, by \eqref{11111} we obtain
$
H(f) - H(\Psi) \in S^{2}
$
for all $1/2<\gamma\leq 1$, see \eqref{pclass}. Moreover, \eqref{11111} also  implies that the Fourier coefficients of $f - \Psi$ are absolutely summable, see \cite[Thm. 1.13]{MR3052498}.

\noindent We write  $H( f) = H(\Psi) + A$, where $A:= H( f- \Psi)$ and set $A_N:=1_N A1_N$. For $k=1$, using the  absolute summability of the Fourier coefficients of $f - \Psi$, we obtain that 
\beq
\big|\Tr H_N( f)  - \Tr  H_N(\Psi)\big| \leq \sum_{i=0}^{N-1} \big|(H_N( f- \Psi))(i,i)\big| = O(1),
\eeq
as $N\to\infty$. 
For $k\geq 2$, the identity
\beq\label{lemeq2}
\Tr  H_N(f) ^k  = \Tr H_N(\Psi)^k  + \sum_{j=0}^{k-1} \Tr  H_N(f) ^j A_N H_N(\Psi)^{k-1-j} 
\eeq
holds. To control the error, we use $A\in S^2$. The cyclicity of the trace and the H\"older inequality for $S^p$ classes, implies for $1\leq j \leq k-1$ 
\begin{align}
\big|\Tr  H_N(f)  ^j A_N H_N(\Psi)^{k-1-j} \big| 
&\leq \|A_N\|_2 \|H_N(\Psi)^{k-1-j}H_N(f) ^j\|_2\notag\\
&\leq C^{k-1-j}\|A\|_2 \|H_N(f) \|_2,\label{1234}
\end{align}
where we used that $\|H_N(\Psi)\|\le C$ for some constant $C>0$ independent of $N$, $\|H_N(f)\|\leq 1$ and the standard inequality $\|CD\|_2\leq \|C\| \|D\|_2$ valid for compact operators $C$ and $D$. Proposition~\ref{cor:tr:asympt} implies that 
\beq
 \|H_N(f) \|_2 = O(\log N)^{1/2} = o(\log N)
\eeq
as $N\to\infty$. For $j=0$ we use in \eqref{1234} the bound 
\beq
\big|\Tr  A_N H_N(\Psi)^{k-1-j} \big| \leq C^{k-2-j} \|A\|_2 \|H_N(\Psi) \|_2.
\eeq
The asymptotic formula
$
 \|H_N(\psi) \|_2 = O(\log N)^{1/2} = o(\log N)
$
holds as well for $N\to\infty$, see \eqref{2} and \eqref{3}. 
This together with \eqref{1234} gives the result. 
\end{proof}

In view of Lemma \ref{lm:asymptPowers} we divide the set of discontinuities in
\beq
\Omega = \Omega_1\cup\Omega_2,
\eeq
where
\beq
 \Omega_1:=\big\{\pm 1\big\} \cup  \big\{z\in \Omega:\ \overline z\notin \Omega\big\}\quad\text{and}\quad \Omega_2:=\big\{z\in\Omega:\ \Im z>0,\  \overline z\in\Omega\big\}.
\eeq
With this notation at hand we show
\begin{lemma}\label{lem:phipsi}
Let $k\in\N$. Then 
\beq
 \Tr H_N(\Psi )^k   =  \sum_{z\in\Omega_1}  \Tr H_N(\varkappa_z\psi_z)^k+\sum_{z\in\Omega_2}  \Tr H_N(\varkappa_z\psi_z +\varkappa_{\overline z} \psi_{\overline z})^k  + O(1)
\eeq
as $N\to\infty$. 
\end{lemma}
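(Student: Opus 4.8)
The plan is to expand both sides multilinearly into cyclic traces of products of the single--jump model operators and to match them term by term, so that the discrepancy is a finite collection of ``mixed'' cross terms that can be bounded by $O(1)$. Since $H(\cdot)$ is linear in the symbol and $H_N(g)=1_N H(g)1_N$, we have $H_N(\Psi)=\sum_{z\in\Omega}H_N(\kappa_z\psi_z)$, whence
\[
\Tr H_N(\Psi)^k=\sum_{(z_1,\dots,z_k)\in\Omega^k}\Tr\Big(\prod_{j=1}^{k}H_N(\kappa_{z_j}\psi_{z_j})\Big).
\]
The same expansion applied to the right--hand side shows that $\Tr H_N(\kappa_z\psi_z)^k$ is the single constant tuple $(z,\dots,z)$, while $\Tr H_N(\kappa_z\psi_z+\kappa_{\overline z}\psi_{\overline z})^k$ collects exactly the tuples with all entries in the pair $\{z,\overline z\}$. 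By the definition of $\Omega_1,\Omega_2$ these families are pairwise disjoint and together account for precisely those tuples with $\{z_1,\dots,z_k\}\subseteq\{w,\overline w\}$ for some $w\in\Omega$ (the points $\pm1$ being self--conjugate). Hence the right--hand side is an honest subsum of the left, and their difference equals the sum of $\Tr(\prod_j H_N(\kappa_{z_j}\psi_{z_j}))$ over the finitely many \emph{mixed} tuples, i.e.\ those containing two entries that are neither equal nor complex--conjugate. It therefore suffices to prove that each such trace is $O(1)$.

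For the cross terms I would pass to a moment (Hankel) integral representation. By \eqref{defHankelrep} and $U_z 1_N=1_N U_z$ one has $H_N(\kappa_z\psi_z)=\tfrac{\kappa_z}{i}U_z\mathbf H_N U_z$ with $\mathbf H_N:=1_N\mathbf H 1_N$, so telescoping the product and using cyclicity of the trace gives, up to the bounded prefactor $\prod_j\kappa_{z_j}/i^{k}$ (recall $|\kappa_z|\le1$), the expression $\Tr(\prod_{j=1}^{k}U_{\omega_j}\mathbf H_N)$ with $\{\omega_j\}=\{z_{j-1}z_j\}$ cyclically labelled. Writing $\tfrac1{n+m+1}=\int_0^1 t^{n+m}\,\d t$ yields $\mathbf H_N=\tfrac1\pi\int_0^1\projector{\xi_t}{\xi_t}\,\d t$, where $\xi_t:=1_N(1,t,t^2,\dots)$; since $U_\omega\ket{\xi_t}=\ket{\xi_{\omega t}}$ and $\inner{\xi_{x}}{\xi_{\omega y}}=\sum_{n<N}(\omega xy)^n=\tfrac{1-(\omega xy)^N}{1-\omega xy}$, the cyclic trace becomes
\[
\Tr\Big(\prod_{j=1}^{k}U_{\omega_j}\mathbf H_N\Big)=\frac1{\pi^{k}}\int_{[0,1]^{k}}\ \prod_{j=1}^{k}\frac{1-(\omega_{j}x_jx_{j+1})^{N}}{1-\omega_{j}x_jx_{j+1}}\,\d x_1\cdots\d x_k ,\qquad x_{k+1}=x_1 .
\]
The sole source of growth in $N$ is the corner $x_1=\dots=x_k=1$, where a factor with $\omega_j=1$ blows up like $N$; this is exactly what produces the $\log N$ in the purely diagonal (Hilbert--matrix) case.

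The crucial observation is that a mixed tuple always has some phase $\omega_{j_0}\neq1$: indeed $\omega_j=1$ for every $j$ forces $z_j=\overline{z_{j-1}}$ around the cycle, i.e.\ the tuple alternates within one conjugate pair, which is excluded. For that $j_0$ the associated factor is uniformly bounded, since $\min_{t\in[0,1]}|1-\omega_{j_0}t|\ge c>0$ (only finitely many values $\omega\in\Omega\cdot\Omega$ occur, all $\neq1$), so $\big|\tfrac{1-(\omega_{j_0}xx')^{N}}{1-\omega_{j_0}xx'}\big|\le 2/c$. Bounding this one factor by $2/c$ removes the edge joining $x_{j_0}$ and $x_{j_0+1}$ and turns the cyclic integral into a \emph{path} integral; bounding every remaining $\omega\neq1$ factor likewise leaves a product of path integrals in the $\omega=1$ factors, each of which is uniformly bounded because
\[
\int_{[0,1]^{p+1}}\prod_{i=1}^{p}\frac{1-(x_ix_{i+1})^{N}}{1-x_ix_{i+1}}\,\d x\ =\ \pi^{p-1}\inner{a}{\mathbf H_N^{\,p-1}a}\ \le\ \pi^{p-1}\norm{a}^{2},
\]
where $a_n=(n+1)^{-1}\in\ell^2(\Z_+)$ and $\norm{\mathbf H_N}\le1$. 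Summing the finitely many mixed tuples then gives the claimed $O(1)$. The one genuinely substantive point, and the step I would be most careful about, is this last reduction: that cutting a single $\omega\neq1$ edge from the cycle destroys the corner singularity and leaves a path integral controlled by the boundedness of $\mathbf H$; everything else is the bookkeeping of the multilinear expansion together with the elementary uniform estimate $|1-\omega t|\ge c$.
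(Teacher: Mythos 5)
Your proof is correct, but it takes a genuinely different route from the paper's. The paper keeps the same multilinear-expansion skeleton but handles the two nontrivial points by citation: first it replaces $\Tr H_N(\Psi)^k$ by $\Tr 1_N H(\Psi)^k 1_N$ up to $O(1)$ using a Szeg\H{o}-type estimate of Laptev--Safarov, $\bigl|\Tr H_N(\Psi)^k - \Tr 1_N H(\Psi)^k 1_N\bigr| \leq C_k \|1_N H(\Psi)(1-1_N)\|_2^2$, and then it kills every mixed cross term by invoking the Pushnitski--Yafaev localization principle, namely that $H(\psi_z)H(\psi_w)\in S^1$ whenever $z\neq w$ and $z\neq \overline{w}$, so that each mixed product has uniformly bounded trace norm; finally it converts back to truncated powers by the same Laptev--Safarov bound. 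You instead never leave the truncated level: you expand $\Tr H_N(\Psi)^k$ directly, identify the right-hand side as the subsum over tuples confined to a single conjugate pair (this bookkeeping, including the disjointness of the $\Omega_1$ and $\Omega_2$ contributions, is accurate), and estimate each mixed cyclic trace by hand via the moment representation $\mathbf H_N = \pi^{-1}\int_0^1 \tprojector{\xi_t}{\xi_t}\,\dt$, the uniform bound $|1-\omega t|\geq c>0$ for the finitely many phases $\omega\neq 1$ that occur, and the estimate $\langle a, \mathbf H_N^{p-1} a\rangle \leq \|a\|^2$ with $a_n=(n+1)^{-1}$. Your observation that a mixed tuple forces some phase $\omega_{j_0}=z_{j_0-1}z_{j_0}\neq 1$ is exactly right and is the crux; cutting that edge does destroy the corner singularity, and the resulting path integrals decouple by Fubini since the paths are vertex-disjoint and the $\omega=1$ factors are nonnegative. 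What each approach buys: yours is self-contained and elementary, avoiding both external references, at the price of relying on the explicit form of the model symbols $\psi_z$ (which is all that is needed, since $\Psi$ is built from them); the paper's is shorter and rests on a general localization principle that requires no explicit kernel formulas, and hence would survive perturbations of the model symbols. It is worth noting that your cross-term estimate is the same device the paper itself uses inside the proof of Lemma \ref{lm:asymptPowers} (the bound \eqref{boundmatrix} with the vector $x(n)=1/(n+1)$), so in effect you have extended that technique to cover what the paper delegates to Pushnitski--Yafaev. One cosmetic slip: your parenthetical claim that all values in $\Omega\cdot\Omega$ are $\neq 1$ is not literally true (e.g.\ $z\overline{z}=1$); what you mean, and what your argument uses, is that the finitely many phases which do occur and are $\neq 1$ admit a uniform lower bound $c$ on $|1-\omega t|$, which is correct.
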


\begin{proof}
We first prove that 
\beq\label{lm:eqeq1}
 \Tr H_N(\Psi )^k   =  \Tr 1_N H(\Psi)^k1_N + O(1).
\eeq
From \cite[Thm. 1.2]{LaSa} we infer that for some constant $C_k>0$ depending on k
\beq\label{lapSaf}
\big| \Tr H_N(\Psi )^k-  \Tr 1_N H(\Psi  )^k1_N \big| \leq C_k \|1_N H(\Psi) (1-1_N)\|^2_2.
\eeq
The explicit representation of the kernel of $H(\psi_z)$ in \eqref{kernel} implies for some constant $C>0$ that
\beq\label{2}
\|1_NH(\Psi)(1-1_N)\|_2^2\leq C \sum_{n=0}^{N-1} \sum_{m=N}^{\infty} \frac 1{(n+m+1)^2}<\infty.
\eeq
To prove the assertion we note that for $z,w\in\Omega$ with $z\neq w$ and $z\neq \overline w$
\beq\label{traceclass}
H(\psi_z )H(\psi_w)\in S^1,
\eeq
 which is proven in  \cite[Lem. 2.5]{MR3475464}. This implies
\beq\label{wer}
  \Tr 1_NH(\Psi)^k1_N= \sum_{z\in\Omega_1}  \Tr 1_N H(\varkappa_z\psi_z )^k 1_N  +\sum_{z\in\Omega_2} \Tr 1_N H(\varkappa_z\psi_z +\varkappa_{\overline z}\psi_{\overline z} )^k 1_N +  O(1).
\eeq
The same argument as in \eqref{lapSaf} yields 
\beq
\Tr 1_N H(\psi_z)^k 1_N  = \Tr  H_N(\psi_z)^k  + O(1)
\eeq
and 
\beq
\Tr 1_N H(\varkappa_z\psi_z +\varkappa_{\overline z}\psi_{\overline z} )^k 1_N  = \Tr  H_N(\varkappa_z\psi_z +\varkappa_{\overline z}\psi_{\overline z} )^k  + O(1).
\eeq
This gives the assertion together with \eqref{lm:eqeq1} and \eqref{wer}. 
\end{proof}

\begin{proof}[Proof of Theorem \ref{thm:tr:asympt}]
The theorem follows directly from  Lemma \ref{lem:phips3}, Lemma \ref{lem:phipsi} and the asymptotics deduced in Lemma \ref{lm:asymptPowers}.
\end{proof}

\section{Proof of Lemma \ref{lm:asymptPowers}}\label{pf:lmtr}

\begin{proof}[Proof of Lemma \ref{lm:asymptPowers}]
The statement for $z=1$ follows directly from \cite[proof of Thm. 4.3]{Wi:66}, see especially \cite[eq. (12)]{Wi:66}, where it is proven that
\beq\label{Widom1}
\Tr H_N(\psi_{1})^{k}=\frac{(-i)^{k}}{2\pi^{2}}B\Big(\frac{k}{2}, \frac{1}{2}\Big)\log N + o(\log N),
\eeq
as $N\to \infty$. As before, $B$ denotes the Beta function. We remark that the result in the paper cited above has been corrected to take into account a factor of $1/2\pi$ missing in the computations of \cite[proof of Thm. 4.3]{Wi:66}. Similar results to the above are true in greater generality, see \cite{Fed}.

\noindent As we mentioned earlier on in \eqref{defHankelrep} , we have $$H(\psi_{-1})=U_{-1} \mathbf H U_{-1},$$ where $U_{-1}$ is the unitary and self-adjoint operator of multiplication by the sequence $(-1)^{n}$ on $\ell^{2}(\Z_+)$. Therefore, the result of \cite{Wi:66} gives
\beq\label{Widom2}
\Tr H_N(\psi_{-1})^k = \frac{(-i)^{k}}{2\pi^{2}}B\Big(\frac{k}{2}, \frac{1}{2}\Big)\log N + o(\log N).
\eeq
This and \eqref{Widom1} give the first part of \eqref{eq1}.

Next we consider the case $z\in \Omega\setminus\{\pm 1\}$ and note that the second part of \eqref{eq1} follows from \eqref{eq12} with $b=0$. Therefore, we only prove \eqref{eq12}. Let $a,b\in\C$. First we note that the unitary $U_z$ and the projection $1_N$ commute.
Using representation \eqref{defHankelrep}, we expand $H_N(a\psi_z+b\psi_{\overline z})^k= (-i)^k\big(a U_z \mathbf H_N U_z + b U_{\overline z} \mathbf H_N U_{\overline z}\big)^k$ in $2^k$ terms and obtain
\begin{align}\label{eq11}
H_N(a\psi_z+b\psi_{\overline z})^k
&= (-i)^k\big( aU_z \mathbf H_N U_z + b U_{\overline z} \mathbf H_N U_{\overline z}\big)^k\notag\\
&= \begin{cases} (-i)^k (a b)^{(k-1)/2} a \, U_z \mathbf H_N^k U_z + (a b)^{(k-1)/2} b\,   U_{\overline z} \mathbf H_N^k U_{\overline z}  + A_1,&\ k\ \text{odd}\\
(-i)^k (ab)^{k/2}\big(U_z \mathbf H_N^k U_{\overline z} + U_{\overline z} \mathbf H_N^k U_{z}\big)  + A_2,&\ k\ \text{even}
\end{cases}
\end{align}
for some operators $A_1$ and $A_2$. We first deal with the errors $A_1$ and $A_2$. The operators $A_1$ and $A_2$ consist of a sum of $2^k-2$ terms and each summand has at least one factor $\mathbf H_N U_{ z^2} \mathbf H_N$ or $\mathbf H_N U_{\overline z ^2} \mathbf H_N$. More precisely, any factor of $A_1$ is either of the form
\beq
a^r b^s U_z\mathbf H_N U_z \cdots U_z\mathbf H_N U_{ z^2} \mathbf H_N U_z \cdots \mathbf H_N U_{\overline z} 
\eeq
for some $r,s\in \N$ with $s+r=k$ or the adjoint of the latter. 
Since $\Im\, z\neq 0$, we have from \cite[Chap. 10, Lem. 1.2]{MR1949210} that the matrix elements of $\mathbf H_N U_{z ^2} \mathbf H_N$ satisfy
\beq
\big|\big(\mathbf H_N U_{z^2} \mathbf H_N(n,m)\big)\big| \leq \frac 2 {|1- z^2|} \frac 1{(1+m)(1+n)}.
\eeq
Using this and  the pointwise bound on the matrix elements of $U_z \mathbf H_N U_z$ of the form $\big|(U_z \mathbf H_N U_z)(n,m) \big|\leq\frac {1} {\pi(n+m+1)}$ ,$n,m\in\Z_+$,  we estimate
\begin{align}
\big|\Tr 1_N & U_z\mathbf H U_z \cdots U_z\mathbf H_N U_{ z^2} \mathbf H_N U_z \cdots \mathbf H_N U_{\overline z} 1_N \big|\notag\\
&= \Big|\sum_{n_1,\cdots, n_k=1}^\infty \big(U_z\mathbf H_N U_z\big)(n_1,n_2)\cdots \big(\mathbf H_N U_{z^2} \mathbf H_N\big)(n_p,n_{p+1})\cdots \big(U_{\overline z} \mathbf H_N U_{\overline z}\big)(n_{k},n_1)\Big|\notag\\
&\leq \frac 2 {|1-z|^2} \big|\< x,\mathbf H_N^{k-1} x\>\big|\notag\\
&\leq \frac 2 {|1-z|^2}\|x\|_2^2<\infty\label{boundmatrix}
\end{align}
for some  $p\in \N$, where we defined $x\in\ell^2(\Z_+)$ with $x(n):=1/(n+1)$.
Writing out all terms of $\tr A_i  $, $i=1,2$, explicitly in terms of its matrix elements and using a bound of the form \eqref{boundmatrix} implies for $i=1,2$ that,
as $N\to\infty$, 
\beq
\big|\Tr  A_i  \big| = O(1).
\eeq

For $k\in\N$ odd we obtain 
\begin{align}\label{eq:Leibniz}
&\big|\Tr \big( (a b)^{(k-1)/2}a  U_z \mathbf H_N^k U_z + (ab)^{(k-1)/2}b   U_{\overline z} \mathbf H_N^k U_{\overline z}  \big)\big|\notag\\
=&
|ab|^{(k-1)/2}\big|\Tr \big(a  U_z \mathbf H_N^k U_z + b   U_{\overline z} \mathbf H_N^k U_{\overline z}  \big)^k\big|
\notag\\
\leq&  | a b|^{(k-1)/2}\Big(|a|\big|\sum_{n=0}^{N-1}  z^{2n} \mathbf H_N^k(n,n)   \big|+|b| \big| \sum_{n=0}^{N-1} {\overline z}^{2n}\mathbf H_N^k(n,n)   \big|\Big).
\end{align}
From the explicit matrix elements $\mathbf H_N(n,m)=\frac 1 {\pi(n+m+1)}$ we obtain for all $n\in\Z_+$ that $0\leq \mathbf H_N^k(n+1,n+1)\leq \mathbf H_N^k(n,n)$, i.e. the sequence $a_n:= \mathbf H_N^k(n,n)$, $n\in\Z_+$,  is strictly monotonously decreasing. Now Lemma \ref{Leibniz} below gives, as $N\to\infty$,
\beq
\eqref{eq:Leibniz} =O(1).
\eeq

In the case $k\in \N$ even the definition of $U_z$ yields
\beq
(-i)^k( a b)^{k/2} \Tr \big(U_z \mathbf H_N^k U_{\overline z} + U_{\overline z} \mathbf H_N^k U_{z}  \big)= 2(-i)^k ( ab)^{k/2} \Tr \mathbf H_N^k 
\eeq
but this is just the asymptotics of the Hilbert matrix which was discussed in the first part of the proof. This gives the assertion. 
\end{proof}

\begin{lemma}\label{Leibniz}
Let $z\in\mathbb T\setminus\{1\}$ and $(a_n)_{n\in\Z_+}$ be such that $0\leq a_{n+1}\leq a_n$ for all $n\in\Z_+$. Then
\beq
\big|\sum_{n=0}^{N} z^{n} a_n \big|\leq a_0 \frac 2 {|1- z|}
\eeq
and, in particular,  $\sum_{n=0}^{N} z^n a_n  =O(1)$ as $N\to\infty$. 
\end{lemma}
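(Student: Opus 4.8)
The plan is to prove this by Abel summation (summation by parts), which recasts the claim as an instance of Dirichlet's test: the partial sums of the geometric series $\sum z^n$ are bounded uniformly in the number of terms, while $(a_n)$ is monotone and nonnegative, and these two facts together force cancellation.

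First I would record the uniform bound on the geometric partial sums. For $z\in\mathbb T\setminus\{1\}$ set $S_M:=\sum_{n=0}^M z^n=(1-z^{M+1})/(1-z)$. Since $|z|=1$ we have $|1-z^{M+1}|\le 2$, and therefore $|S_M|\le 2/|1-z|$ for every $M\in\Z_+$, with the bound independent of $M$.

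Next, setting $S_{-1}:=0$ and writing $z^n=S_n-S_{n-1}$, I would rearrange the sum by shifting the index in the term carrying $S_{n-1}$ and using $S_{-1}=0$:
\[
\sum_{n=0}^N z^n a_n = \sum_{n=0}^N (S_n-S_{n-1})a_n = S_N a_N + \sum_{n=0}^{N-1} S_n\,(a_n-a_{n+1}).
\]
Finally, since $(a_n)$ is nonincreasing and nonnegative, each difference $a_n-a_{n+1}$ is $\ge 0$, so applying $|S_n|\le 2/|1-z|$ and the triangle inequality yields
\[
\Big|\sum_{n=0}^N z^n a_n\Big|
\le \frac{2}{|1-z|}\Big(a_N + \sum_{n=0}^{N-1}(a_n-a_{n+1})\Big)
= \frac{2}{|1-z|}\big(a_N + (a_0-a_N)\big)
= \frac{2a_0}{|1-z|},
\]
where the inner sum telescopes. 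The resulting bound does not depend on $N$, which is precisely the $O(1)$ conclusion as $N\to\infty$.

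There is no genuine obstacle in this argument; the only points deserving care are the index bookkeeping in the summation by parts and the observation that the monotonicity hypothesis makes the differences $a_n-a_{n+1}$ nonnegative. It is exactly this nonnegativity that lets the triangle inequality produce a telescoping sum (rather than a sum of absolute values of increments, which need not collapse), and it is what makes the constant come out as the clean $2a_0/|1-z|$ stated in the lemma.
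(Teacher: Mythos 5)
Your proof is correct and follows essentially the same route as the paper: Abel summation against the partial sums $S_n$ of the geometric series, the uniform bound $|S_n|\le 2/|1-z|$, and the telescoping of the nonnegative differences $a_n-a_{n+1}$. In fact your write-up is more careful than the paper's, which simply states the summation-by-parts identity (with a minor index slip in its displayed formula) and asserts the conclusion.
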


\begin{proof}
The lemma follows directly from Abel's summation formula
\beq
\sum_{n=0}^{N} z^{n} a_n = B_N a_N + \sum_{k=1}^{N-1} B_k (a_k -a_{k-1})
\eeq
where $B_k=\sum_{l=0}^k z^l$. 
\end{proof}

\section*{Acknowledgements}
We are grateful to Sasha Pushnitski for teaching us about Hankel matrices and suggestions on an earlier version of the paper. We also thank Mihail Poplavskyi for several discussions on the topic.

\end{document}